\documentclass{article}
\usepackage[utf8]{inputenc}
\usepackage{fullpage}
\usepackage{amssymb}
\usepackage{amsmath}
\usepackage{amsthm}
\usepackage{enumerate}
\usepackage{CJK}
\usepackage{color}
\usepackage{mathrsfs}
\usepackage{array}
\textwidth=30cc\baselineskip=10pt

\newtheorem{theorem}{Theorem}[section]
\newtheorem{proposition}[theorem]{Proposition}
\newtheorem{definition}[theorem]{Definition}
\newtheorem{corollary}[theorem]{Corollary}
\newtheorem{lemma}[theorem]{Lemma}

\numberwithin{equation}{section} \numberwithin{theorem}{section}
\numberwithin{equation}{section}

\newtheorem{example}[theorem]{Example}
\numberwithin{equation}{section}

\newcommand{\vertrule}[1][1ex]{\rule{.6pt}{#1}}
\newcommand{\vbl}{\hskip0.05cm \vertrule[1.5ex]\hskip0.05cm}
\newcommand{\vbs}{\hskip0.05cm \vertrule[1.3ex]\hskip0.05cm}
\usepackage{cancel}
\usepackage{ulem}
\usepackage{soul}

\DeclareMathOperator*\inte{int}

\DeclareMathOperator*\co{co}

\DeclareMathOperator*\gph{gph}

\usepackage{cancel}
\usepackage{ulem}
\usepackage{soul}
\usepackage{comment}

\title{Relative Well-Posedness of Truncated Constrained Systems Accompanied\\ by Variational Calculus}

\author{Boris S. Mordukhovich\thanks{Department of Mathematics, Wayne State University
Detroit MI 48202 (e-mail: aa1086@wayne.edu). } 
\and Pengcheng Wu\thanks{Department of Applied Mathematics, The Hong Kong Polytechnic University, Hong Kong (e-mail: pcwu0725@163.com).}
\and Xiaoqi Yang\thanks{Department of Applied Mathematics, The Hong Kong Polytechnic University, Kowloon, Hong Kong (mayangxq@polyu.edu.hk).}}

\begin{document}
\maketitle

{\bf Abstract:} 
The paper concerns foundations of sensitivity and stability analysis in optimization and related areas, being primarily addressed truncated constrained systems. We consider general models, which are described by multifunctions between Banach spaces and concentrate on characterizing their well-posedness properties that revolve around Lipschitz stability and metric regularity relative to sets. Invoking tools of variational analysis and generalized differentiation, we introduce new robust notions of relative contingent coderivatives. The novel machinery of variational analysis leads us to establishing complete characterizations of such properties and developing basic rules of variational calculus interrelated with the obtained characterizations of well-posedness. Most of the our results valid in general infinite-dimensional settings are also new in finite dimensions.

{\bf Keywords:}  well-posedness, relative Lipschitzian stability and metric regularity, constrained systems, variational analysis and generalized differentiation, variational calculus rules

{\bf AMS:} 49J53, 49J52, 49K40

\section{Introduction}\label{intro} This paper is devoted to the study of stability and related well-posedness properties of general systems described by truncated constrained set-valued mappings/multifunctions between Banach spaces. The main difference of the novel {\it relative well-posedness} notions for multifunctions studied here and their known prototypes is that now we naturally and effectively incorporate constraints and truncations, which are crucial in applications and are highly challenging mathematically.

Although most of the results obtained below are new even in 
finite dimensions, we address here general infinite-dimensional settings. The main motivation for this (besides mathematical reasons) is that {\it infinite-dimensional models} frequently and naturally appear in various areas of constrained optimization and related topics. Among them, we mention systems control, semi-infinite programming, dynamic optimization, finance, stochastic programming, transportation networks, etc. Sensitivity analysis of such truncated constrained models and their optimal solutions is of the highest priority for practical applications. Saying this, it should be emphasized that---as seen below---the infinite-dimensional analysis of relative well-posedness is much more involved in comparison with its finite-dimensional counterpart and requires a fairly delicate machinery. Providing the formulations and discussions of the relative well-posedness properties, we employ advanced tools of {\it variational analysis} and {\it generalized differentiation} for their study and characterizations. To proceed in this direction, new notions of {\it relative} coderivatives for multifunctions are introduced and investigated in the paper. These notions are certainly  of their own interest for variational theory and applications, while the main goal here  is to implement them deriving {\it complete characterizations} of the relative well-posedness properties under consideration.

Well-posedness properties of (mainly unconstrained) multifunctions and their coderivative characterizations have been broadly developed together with numerous applications to various problems in nonsmooth optimization, variational inequalities, and optimal control in finite and infinite dimensions. We refer the reader to the books \cite{Borwein2005,ioffe,Mordukhovich2006,m18,m24,Rockafellar1998,thibault} 
with the vast bibliographies and commentaries therein. { For constrained systems, the notions of metric regularity, directional metric (sub)regularity and isolated calmness were investigated in, e.g.,  \cite{arutyunov2006,ioffe} by using strong slopes and in \cite{Benko2020,gfrerer2013} by using directional limiting coderivatives, respectively. In particular, the relative isolated calmness (namely, a recovery bound property) of various regularization models has been extensively investigated in machine learning and compressed sensing; see, e.g., \cite{Bickel09,Hu2017,Meinshausen2009La} and the references therein. 
It was also noted in \cite{gfrerer2013} that for the fulfillment of certainty stationarity conditions, we only need a regular behavior of the constrained systems with respect to one single critical direction but not on the whole space. The Lipschitz-like property relative to a set was studied in \cite{Benko2020} and \cite{Yang2021,Yao2023,Yao2023-2} by using directional limiting coderivatives and projectional coderivatives, respectively. Specifically, 
it has been shown that constrained coderivatives can provide sufficient conditions for stability characterizations of solution mappings of affine variational inequalities \cite{Yao2023} and the level sets for extended real-valued functions \cite{Yang2021}.  We will compare our new results with those known in the literature. Moreover, after the initial submission of this paper, a number of follow-up works have been developed; see, in particular, \cite{qin2024,thinh2024b,thinh2024a}. All of this shows a growing interest of the optimization community to the topics under consideration.}\vspace*{0.01in}

This paper introduces novel {\it relative contingent coderivatives} of set-valued mappings between Banach spaces and { establishes} {\it complete coderivative characterizations} of the relative Lipschitz-like property, as well as the { relative metric regularity} and relative linear openness of a set-valued mapping, under a newly defined relative partial sequential normal compactness (PSNC) condition. The efficient usage of the established stability criteria for structured multifunctions appearing in truncated constrained problems and applications requires {\it variational calculus}. We derive comprehensive {\it pointbased chain} and {\it sum rules} for {\it relative contingent coderivatives}, the limiting constructions from the obtained characterizations of the relative Lipschitz-like property for general multifunctions, as well as that for the relative PSNC. Our proofs are mainly based on the newly established {\it relative} versions of {\it extremal principle} and {\it fuzzy intersection rule}. It is important to emphasize that the obtained {\it characterizations of well-posedness} play a significant role in verifying the {\it fulfillment of new calculus rules} for major classes of truncated constrained multifunctions.\vspace*{0.02in}

The {\it main contributions} of the paper are as follows:\vspace*{-0.01in}

$\bullet$ We introduce novel {\it relative contingent coderivatives} of set-valued mappings by using the intersection of $\varepsilon$-normal set to the graph of the truncated constrained multifunction with the images of the tangent cones of the constrained/truncated sets under the respective duality mappings and then establish in these terms {\it complete coderivative characterizations} of the relative Lipschitz-like property of multifunctions between reflexive Banach spaces.

$\bullet$ We prove the {\it equivalences} (up to taking the inverse) between the {\it relative Lipschitz-like property}, { \it relative metric regularity}, and {\it relative linear openness}. As an interesting application of these results, we reveal a new relationship between weakly and strongly convergent sequences.

$\bullet$ We establish novel {\it variational calculus rules} for {\it relative contingent coderivatives} and {\it relative PSNC property}. These rules, together with the characterizations of well-posedness, are instrumental to facilitate applications of relative contingent coderivatives to broad classes of truncated constrained systems in variational analysis.\vspace*{0.01in}

The rest of the paper is organized as follows. In Section~\ref{sec:gen-diff}, we first recall needed  preliminaries from variational analysis  and then introduce new  relative coderivative constructions with deriving some of their important properties used to establish major results below. Section~\ref{sec:well-posed} presents central results of the paper. We introduce there the {\it relative Lipschitz-like, metric regularity}, and {\it linear openness} properties of truncated multifunctions and obtain their {\it complete coderivative characterizations} in terms of the new constructions from Section~\ref{sec:gen-diff}.
In Section \ref{sec:calculus}, we establish {\it variational calculus rules} for relative contingent coderivatives and  relative PSNC property. \vspace*{-0.05in}

\section{New Tools of Generalized Differentiation}\label{sec:gen-diff}
We start this section with recalling some preliminaries from variational analysis and generalized differentiation in Banach spaces by following mainly the books \cite{Borwein2005,Mordukhovich2006}. Throughout the paper, the {\it standard notation} of variational analysis and generalized differentiation is used; cf.\ \cite{Borwein2005,Mordukhovich2006,Rockafellar1998}. Recall that the norms of a Banach space $X$ and its topological dual $X^*$ are denoted by $\|\cdot\|_X$ and $\|\cdot\|_{X^*}$, with the closed unit balls $B_X\subset X$ and $B_{X^*}\subset X^*$, respectively. The notation $\mathbb S_X=\{x\in X~|~\|x\|_X=1\}$ signifies the unit sphere in $X$. For $x\in X$ and $x^*\in X^*$, the symbol $\langle x^*,x\rangle$ stands for the canonical duality between $X^*$ and $X$. We use the notation $``x_k\to x"$ to indicate the strong convergence of a sequence $x_k$ in $X$ to $x$, $``x_k\rightharpoonup x"$ for the weak convergence of a sequence $x_k$ in $X$ to $x$ (i.e., $\langle x^*,x_k\rangle\to\langle x^*,x\rangle$ for any $x^*\in X$), and $``x_k^*\rightharpoonup^* x^*"$ to denote the weak$^*$ convergence of a sequence $x_k^*$ in $X^*$ to $x^*$ (i.e., $\langle x_k^*,x\rangle\to\langle x^*,x\rangle$ for any $x\in X$). Given a subset $\Omega \subset X$, the affine hull of $\Omega$ is denoted by aff($\Omega$). The symbol $``x_k\stackrel{\Omega}{\longrightarrow}x"$ is used to indicate that $x_k\to x$ with $x_k\in\Omega$ for $k \in \mathbb N:=\{1,2,\ldots\}$. The notation $\co\Omega$ signifies the convex closure of $\Omega$. Given $\bar x\in X$, we use ${\cal N}(\bar x)$ to signify the collection of all (open) neighborhoods of $\bar x$. Finally, denote $a^+:=\max\{a,0\}$ for $a\in\mathbb R$.

For Banach spaces $X$ and $Y$, $\mathcal{L}(X,Y)$ denotes the space of all bounded linear operators from $X$ to $Y$. Given $A \in\mathcal{L}(X,Y)$,  the norm of $A$ is defined by $\|A\|_{\mathcal{L}(X,Y)}: = \sup_{x \in X \setminus \{0\}} \|A(x)\|_Y/\|x\|_X.$ We say that $\Omega\subset X$ is {\it closed around} a point $\bar x\in\Omega$ if the intersection $\Omega\cap U$ is closed for some closed set $U$ containing $\bar x$ as an interior point. The interior of $\Omega$ is denoted by $\operatorname{int} \Omega$. The {\it indicator function} $\iota_\Omega$ of $\Omega$ is defined by $\iota_\Omega(x):=0$ if $x\in\Omega$ and $\infty$ otherwise.
The {\it distance function} associated with $\Omega$ is given by $d(x,\Omega):=\inf\limits_{z\in\Omega}\|z-x\|_X, \ x \in X,$ where $d(x,\Omega):=\infty$ if $\Omega= \emptyset$  by convention.
For a set $K\subset X$, we define $\mbox{proj}_\Omega(K):=\big\{z\in\Omega~\big|~\exists x\in K~\mbox{with}~\|z-x\|_X=d(x,\Omega)\big\},$
which reduces to the standard point projection $\mbox{proj}_\Omega(x)$ if $K=\{x\}$.\vspace*{-0.05in}

\begin{definition}
Let $S:X\rightrightarrows Y$ be a multifunction between Banach spaces. Then for any point $\bar x$ from its domain ${\rm dom}\,S:=\{x\in X\;|\;S(x)\ne\emptyset\}$, we have:

{\bf(i)} The {\sc strong} and {\sc weak}$^*$ $($if $Y=Z^*$ for some Banach space Z$)$ {\sc outer limits} of $S$ are defined, respectively, by
$$
\begin{aligned}
&s\mbox{-}\limsup\limits_{x\to\bar x}S(x):=\big\{y\in Y~\big|~\exists x_k\to\bar x~\operatorname{and}~y_k\in S(x_k)~\operatorname{such~that}~y_k\to y\big\},\\ &w^*\mbox{-}\limsup\limits_{x\to\bar x}S(x):=\big\{y\in Y~\big|~\exists x_k\to\bar x~\operatorname{and}~y_k\in S(x_k)~\operatorname{such~that}~y_k\rightharpoonup^* y\big\}.
\end{aligned}
$$\vspace*{-0.15in}

 {\bf(ii)} The {\sc strong inner limit} of $S$ is given by
$$
s\mbox{-}\liminf\limits_{x\to\bar x}S(x):=\big\{y\in Y~\big|~\forall x_k\to\bar x,~\exists y_k\in S(x_k)~\operatorname{such~that}~y_k\to y\big\}.
$$
\end{definition}\vspace*{-0.05in}

Recall now some notions of tangent and normal approximations.\vspace*{-0.05in}

\begin{definition}\label{cone} Let $\Omega$ be a subset of a Banach space $X$ with $\bar x\in\Omega$.

 {\bf(i)} The $($Bouligand-Severi$)$ {\sc tangent/contingent cone} to $\Omega$ at $\bar x$ is defined by 
 $T(\bar x;\Omega):=s\mbox{-}\limsup\limits_{t\downarrow0}\ (\Omega - \bar x)/t.$
 
{\bf(ii)} Given $\varepsilon \geq 0$, the set of {\sc $\varepsilon$-normals} to $\Omega$ at $\bar x$ is
\begin{equation*}
\widehat N_\varepsilon(\bar x;\Omega):=\Big\{x^*\in X^*~\Big|~\limsup\limits_{x\stackrel{\Omega}{\longrightarrow}\bar x}\frac{\langle x^*,x-\bar x\rangle}{\|x-\bar x\|_X}\leq\varepsilon\Big\}.
\end{equation*}
For $\varepsilon=0$, the set $\widehat N(\bar x;\Omega):=\widehat N_0(\bar x;\Omega)$ is a cone, which is called the $($Fr\'echet$)$ {\sc regular normal cone} to $\Omega$ at $\bar x$. { It is obvious that $\widehat N_\varepsilon(\bar x;\Omega)$ is positively homogeneous with respect to $\varepsilon$, i.e., $\lambda\widehat N_\varepsilon(\bar x;\Omega)=\widehat N_{\lambda\varepsilon}(\bar x;\Omega)$ for all $\lambda>0$.}

{\bf(iii)} The $($Mordukhovich$)$ {\sc limiting normal cone} to $\Omega$ at $\bar x$ is
\begin{equation}\label{lnc}
N(\bar x;\Omega):=w^*\mbox{-}\limsup\limits_{x\stackrel{\Omega}{\longrightarrow}\bar x,\,\varepsilon\downarrow0}\widehat N_\varepsilon(x;\Omega).
\end{equation}

{\bf(iv)} The set $\Omega$ is said to be $($normally$)$ {\sc regular} at $\bar x$ if $N(\bar x;\Omega) = \widehat N(\bar x;\Omega)$.
\end{definition}

For a mapping $S:X\rightrightarrows Y$, consider its graph
$\operatorname{gph} S:=\{(x,y)~|~y\in S(x)\}$ and {\it kernel}  $
\ker S:=\big\{x\in X~\big|~0\in S(x)\big\}$. We say that $S$ is {\it positively homogeneous} if
\begin{equation}\label{homo}
0\in S(0)~~\operatorname{and}~~S(\lambda x)=\lambda S(x)~~\operatorname{for~all}~\lambda>0~\operatorname{and}~x \in X
\end{equation}
meaning that $\operatorname{gph} S$ is a cone. If $S$ is a positively homogeneous multifunction, the {\it outer norm} of $S$ is defined by $|S|^+:=\sup\limits_{x\in B_X}\sup\limits_{y\in S(x)}\|y\|_Y
$. Given nonempty sets $\Omega\subset X$ and $\Theta\subset Y$, the {\it truncated restriction} of $S$ on $\Omega$ by $\Theta$ is
\begin{equation*}
S|_\Omega^\Theta: x \mapsto \left \{\begin{aligned}
&S(x)\cap\Theta~~&&\operatorname{if}~x\in \Omega,\\
&\emptyset~~&&\operatorname{if}~x\notin \Omega.
\end{aligned}
\right.
\end{equation*}
In the case of parametric optimization, the truncation set $\Theta$ can be viewed as a set of optimal solutions to a lower-level problem in the setting of {\it bilevel optimization}: $\min \iota_\Theta(y)\;\mbox{ subject to }\;y \in S|_\Omega(x)$. This means  that  $y \in S|_\Omega^\Theta(x)$ if and only if $y$ is a solution to the bilevel optimization problem with $\iota_\Theta(y)=0$. Observe that
$$
\operatorname{gph} S|_\Omega^\Theta=\operatorname{gph} S\cap[\Omega\times \Theta],\quad~~\operatorname{dom} S|_\Omega^\Theta=\Omega\cap S^{-1}(\Theta).
$$
{ and then use the notation $S|_\Omega:=S|_\Omega^Y$ and $S|^\Theta:=S|_X^\Theta$ in what follows.}

Before defining our new coderivative constructions for multifunctions, we recall the notion of the {\it duality mapping} $J_X:X\rightrightarrows X^*$ between a Banach space $X$ and its topological dual space $X^*$ defined by
\begin{equation}\label{dual map}
J_X(x):=\big\{x^*\in X^*~\big|~\langle x^*,x\rangle=\|x\|_X^2,~\|x^*\|_{X^*}=\|x\|_{X}\big\}\;\mbox{ for all }\;x\in X.
\end{equation}
Observe from the definition that $J_X(\lambda x)=\lambda J_X(x)$ for all $\lambda\in\mathbb R$ and $x\in X$. It follows from \cite[Proposition~2.12]{Bonnans2000} that $J_X(x)\not=\emptyset$ whenever $x\in X$. By the results due  to Trojanski \cite{Troyanski1971}, a reflexive Banach space $X$ (this means $X\cong X^{**}$) can be renormed so that $X$ and $X^*$ are both locally uniformly convex. Throughout this paper, we say that $X$ is a {\it reflexive Banach space} meaning that $X$ is a reflexive Banach space which is renormed in such a way that both $X$ and $X^*$ are locally uniformly convex. Furthermore, the result of { \cite[Proposition~3.6]{Cioranescu1990} tells us that $J_X:X\to X^*$ is {\it homeomorphic} (bijective and bicontinuous)} when $X$ is a reflexive Banach space. Note that $J_X=id_X$ (identity mapping of $X$) if $X$ is a Hilbert space. The duality mapping is instrumental for the underlying coderivative constructions defined below providing a {\it bridge} between the contingent cone $T(\bar x;\Omega)$ in the original space $X$ and the set of $\varepsilon$-normals $\widehat N_\varepsilon(\bar x;\Omega)$ in the dual space $X^*$.

{  Next we present two results concerning the duality mapping $J_X$, which will be used in the sequel. In particular, the second one shows that, in a reflexive Banach space, the tangent cones of closed convex sets preserve the inner semicontinuity property under the action of the duality mapping.\vspace*{-0.02in}

\begin{lemma}\label{JTN}
Let $\Omega\subset X$, $x\in\Omega$, and $\varepsilon\geq0$. Suppose that $x^*\in\widehat N_\varepsilon(x;\Omega)\cap J_X(T(x;\Omega))$. Then we have $\|x^*\|_{X^*}\leq\varepsilon$.
\end{lemma}\vspace*{-0.05in}
\begin{proof}
Pick $x^*\in \widehat N_\epsilon(x;\Omega) \cap J_X\big(T(x;\Omega)\big)$ and get from $x^*\in J_X\big(T(x;\Omega)\big)$ that there exists $z\in T(x;\Omega)$ such that $\langle x^*,z\rangle=\|x^*\|_{X^*}^2$ and $\|z\|_X=\|x^*\|_{X^*}$. This allows us to find sequences $t_k\downarrow0$ and $x_k\stackrel{\Omega}{\longrightarrow}\bar x$ with $(x_k-\bar x)/t_k\to z$ as $k\to\infty$. The case where $z=0$ is trivial. If $z\not=0$, then we have $(x_k-\bar x)/\|x_k-\bar x\|_X\to z/\|z\|_X$ as $k\to\infty$, which being combined with $x^*\in \widehat N_\epsilon(x;\Omega)$ tells us that
$$
\|x^*\|_{X^*}=\left\langle x^*,\frac{z}{\|z\|_{X}}\right\rangle=\lim\limits_{k\to\infty}\frac{\langle x^*,x_k-\bar x\rangle}{\|x_k-\bar x\|_X}\leq\varepsilon
$$
and thus completes the proof of the lemma.
\end{proof}}\vspace*{-0.05in}

\begin{lemma}\label{JT}
Let $X$ be a reflexive Banach space, $\Omega\subset X$ be a closed and convex set, and $x^*\in J_X(T(\bar x;\Omega))$. Then for any $\varepsilon>0$, there exists $\delta>0$ with
$$
x^*\in J_X\big(T(x;\Omega)\big)+\varepsilon B_{X^*}\;\mbox{ whenever }\;x\in\Omega\cap(\bar x+\delta B_X).
$$
\end{lemma}
\begin{proof}
Suppose on the contrary that the claimed inclusion fails and then find a number $\varepsilon_0>0$ and a sequence $x_k\stackrel{\Omega}{\longrightarrow}\bar x$ as $k\to\infty$ such that
\begin{equation}\label{notin}
x^*\notin J_X\big(T(x_k;\Omega)\big)+\varepsilon_0 B_{X^*}\;\mbox{ for all }\;k\in\mathbb N.
\end{equation}
By the aforementioned properties of the duality mapping, we deduce from $J_X^{-1}(x^*)\in T(\bar x;\Omega)$ and the well-known inner semicontinuity of the tangent cone (see, e.g., \cite[page~47]{Bonnans2000})
$
T(\bar x;\Omega)\subset s\mbox{-}\liminf\limits_{x\stackrel{\Omega}{\longrightarrow}\bar x}T(x;\Omega),
$
valid for convex sets $\Omega$, that there exists a sequence of $\{z_k\}$ with $z_k\in T(x_k;\Omega)$ such that $z_k\to J_X^{-1}(x^*)$ as $k\to \infty$. Letting $x_k^*:=J_X(z_k)$, we get that
$
x_k^*\in J_X\big(T(x_k;\Omega)\big)~~\operatorname{and}~~x_k^*\to x^*\;\mbox{ as }\;k\to\infty,
$
which contradicts \eqref{notin} and thus completes the proof.
\end{proof}

{ 
The duality mapping and (sub)differentiation of the norm in a Banach space are closely related. Consider the product space $X\times Y$ with the norm $\|(x,y)\|_{X\times Y}:=\sqrt{\|x\|_X^2+\|y\|_Y^2}$. It is easy to check that the duality mapping of $X\times Y$ satisfies $J_{X\times Y}(x,y)=(J_X(x), J_Y(y))$. Combining this with \cite[Example~3.36]{Nam}, gives us
$$
\partial\|(\cdot,\cdot)\|_{X\times Y}(x,y)=\left\{
\begin{aligned}
&B_{X^*\times Y^*}~~&&\mbox{if}~ (x,y)=(0,0),\\
&\left(\frac{J_X(x)}{\|(x,y)\|_{X\times Y}},\frac{J_Y(y)}{\|(x,y)\|_{X\times Y}}\right)~~&&\mbox{if}~ (x,y)\not=(0,0).
\end{aligned}
\right.
$$
In the case of reflexive Banach spaces $X$ and $Y$, it is well known that $\|(x,y)\|_{X\times Y}$ is Fr$\acute{\text{e}}$chet differentiable at any nonzero point. When $(x,y)=(0,0)$, we obviously have that $\|(x,y)\|^2_{X\times Y}$ is Fr$\acute{\text{e}}$chet differentiable, and thus the function $\|(x,y)\|^2_{X\times Y}$ is Fr$\acute{\text{e}}$chet differentiable on the whole space $X\times Y $ with the derivative
\begin{equation}\label{diffduality}
\nabla \|(\cdot,\cdot)\|^2_{X\times Y}(x,y)=2\left(J_X(x),J_Y(y)\right).
\end{equation}}\vspace*{-0.1in}

Now we are ready to define new relative contingent coderivatives for multifunctions that are used below. In comparing with the recent {\it projectional coderivative} studied in \cite{Yang2021,Yao2023}, where the projection of a normal cone to the tangent cone of the constrained set was used, we are motivated by the fact that the intersection of the normal cone with the tangent cone would be easier to deal with than to find projections. Observe that the novel coderivatives below are the first ones in variational analysis that combine {\it primal/tangent} and {\it dual/normal} constructions in infinite dimensions. It is worth noting that apart from the multifunction $S$, another two components in the definition below are the sets $\Omega$ and $\Theta$. The set $\Omega$ represents the restriction on the parameters, while the set $\Theta$ can be understood as a truncation of the image of $S$.  \vspace*{-0.03in}

\begin{definition}\label{coderivatives}
Let $S:X\rightrightarrows Y$ be a multifunction between Banach spaces, $\Omega\subset X$, $\Theta\subset Y$ be nonempty, and $(\bar x,\bar y)\in \operatorname{gph} S|_\Omega^\Theta$.

{\bf(i)} Given $\varepsilon\geq0$, the {\sc $\varepsilon$-contingent coderivative} of $S$ {\sc relative to} $\Omega\times\Theta$ at $(\bar x,\bar y)$ is defined as a multifunction $\widehat D_\varepsilon^*S_\Omega^\Theta(\bar x|\bar y):Y^*\rightrightarrows X^*$ with the values
\begin{equation}\label{harnepsilion'}
\begin{array}{ll}
\widehat D_\varepsilon^*S_\Omega^\Theta(\bar x\vbl\bar y)(y^*):=\big\{x^*\in X^*~\big|&(x^*,-y^*)\in\widehat N_\varepsilon\big((\bar x,\bar y);{ \operatorname{gph} S|_\Omega^\Theta}\big)\\
&\cap\big[J_X\big(T(\bar x;\Omega)\big)\times J_Y\big(T(\bar y;\Theta)\big)\big]\big\}.
\end{array}
\end{equation}

{\bf(ii)} The {\sc normal contingent coderivative} of $S$ {\sc relative to} $\Omega\times\Theta$ at $(\bar x,\bar y)$ is a multifunction $D_N^*S_\Omega^\Theta(\bar x\vbl\bar y):Y^*\rightrightarrows X^*$ defined by
\begin{equation}\label{normalcoder}
D_N^*S_\Omega^\Theta(\bar x\vbl \bar y)(\bar y^*):=w^*\mbox{-}\limsup\limits_{\stackrel{\stackrel{(x,y)\stackrel{{ \operatorname{gph} S|_\Omega^\Theta}}{\longrightarrow}(\bar x,\bar y)}{y^*\rightharpoonup^* \bar y^*}}{\varepsilon\downarrow0}}
\widehat D_\varepsilon^*S_\Omega^\Theta(x\vbs y)(y^*).
\end{equation}
That is, \eqref{normalcoder} is the collection of such $\bar x^*\in X^*$ for which there exist sequences $\varepsilon_k\downarrow 0$, $(x_k,y_k)\stackrel{{ \operatorname{gph} S|_\Omega^\Theta}}{\longrightarrow}(\bar x,\bar y)$, and $(x_k^*,y_k^*)\rightharpoonup^*(\bar x^*,\bar y^*)$ with
\begin{equation}\label{cod-seq}
(x_k^*,-y_k^*)\in\widehat N_{\varepsilon_k}\big((x_k,y_k);{ \operatorname{gph} S|_\Omega^\Theta}\big)\cap\big[J_X\big(T(x_k;\Omega)\big)\times J_Y\big(T(y_k;\Theta)\big)\big],\quad k\in\mathbb N.
\end{equation}

{\bf(iii)} The {\sc mixed contingent coderivative} of $S$ {\sc relative to} $\Omega\times\Theta$ at $(\bar x,\bar y)$ is a multifunction $D_M^*S_\Omega^\Theta(\bar x\vbl\bar y):Y^*\rightrightarrows X^*$ defined by
\begin{equation}\label{mixedcoder}
D_M^*S_\Omega^\Theta(\bar x\vbl\bar y)(\bar y^*):=w^*\mbox{-}\limsup\limits_{\stackrel{\stackrel{(x,y)\stackrel{{ \operatorname{gph} S|_\Omega^\Theta}}{\longrightarrow}(\bar x,\bar y)}{y^*\rightarrow\bar y^*}}{\varepsilon\downarrow 0}}
{ \widehat D_\varepsilon^*S_\Omega^\Theta(x\vbs y)(y^*)}.
\end{equation}
That is, \eqref{mixedcoder} is the collection of such $\bar x^*\in X^*$ for which there exist sequences $\varepsilon_k\downarrow0$, $(x_k,y_k)\stackrel{{ \operatorname{gph} S|_\Omega^\Theta}}{\longrightarrow}(\bar x,\bar y)$, $y_k^*\to\bar y^*$, and $x_k^*\rightharpoonup^*\bar x^*$ with $(x^*_k,y^*_k)$ satisfying \eqref{cod-seq}.

{\bf{(iv)}} The {\sc mirror contingent coderivative} of $S$ {\sc relative to} $\Omega\times\Theta$ at $(\bar x,\bar y)$ is a multifunction $D_m^*S_\Omega^\Theta(\bar x\vbl \bar y):Y^*\rightrightarrows X^*$ defined by
\begin{equation}\label{mirrorcoder}
D_m^*S_\Omega^\Theta(\bar x\vbl\bar y)(\bar y^*):=s\mbox{-}\limsup\limits_{\stackrel{\stackrel{(x,y)\stackrel{{ \operatorname{gph} S|_\Omega^\Theta}}{\longrightarrow}(\bar x,\bar y)}{y^*\rightharpoonup^* y^*}}{\varepsilon\downarrow 0}}
\widehat D_\varepsilon^*S_\Omega^\Theta(x\vbs y)(y^*).
\end{equation}
That is, \eqref{mirrorcoder} is the collection of such $\bar x^*\in X^*$ for which there exist sequences $\varepsilon_k\downarrow0$, $(x_k,y_k)\stackrel{{ \operatorname{gph} S|_\Omega^\Theta}}{\longrightarrow}(\bar x,\bar y)$, $y_k^*\rightharpoonup^*\bar y^*$, and $x_k^*\to\bar x^*$ with $(x^*_k,y^*_k)$ satisfying \eqref{cod-seq}.
\end{definition}

In the above notions, $\bar y$ is dropped if $S(\bar x)=\{\bar y\}$. For $\widehat D_\varepsilon^*S_\Omega^\Theta(\bar x\vbl\bar y)(y^*)$, if $\Omega=X$, we drop $\Omega$ and write it $D_\varepsilon^*S^\Theta(\bar x\vbl\bar y)$ and if $\Theta = Y$, we write as $D_\varepsilon^*S_\Omega(\bar x\vbl\bar y)$. Such simplifications also apply when the subscript $\varepsilon$ is replaced by $N$, $M$ and $m$. Without $\Omega$ and $\Theta$, the constructions of Definition~\ref{coderivatives} reduce to those in \cite{Mordukhovich2006}. 

We obviously have the inclusion
\begin{equation}\label{in}
\begin{aligned}
\widehat D_\varepsilon^*S_\Omega^\Theta(\bar x\vbl\bar y)(y^*)\subset \widehat D_\varepsilon^*S|_\Omega^\Theta(\bar x|\bar y)(y^*)\mbox{ for all }\;y^*\in Y^*,
\end{aligned}
\end{equation}
where $\widehat D_\varepsilon^*S|_\Omega^\Theta(\bar x\vbl\bar y)(y^*)$ (with a stroke between $S$ and $^\Theta_\Omega$) is for the $\epsilon$-mixed coderivative of multifunction $S|_\Omega^\Theta$ in the sense of \cite{Mordukhovich2006}, in which no intersection with the images of the tangent cones under the duality mapping was employed. The similar inclusions are valid when the subscript $\varepsilon$ is replaced by $N$, $M$ and $m$. We also have
\begin{equation}\label{cod-normality}
\widehat D_0^*S_\Omega^\Theta(\bar x\vbl\bar y)(y^*)\subset D_M^*S_\Omega^\Theta(\bar x\vbl\bar y)(y^*)\subset D_N^*S_\Omega^\Theta(\bar x\vbl\bar y)(y^*)\;\mbox{ for all }\;y^*\in Y^*,
\end{equation}
where the second inclusion holds as an equality provided that ${\rm dim}\,Y<\infty$. Replacing $D_M^*$ by $D_m^*$, both inclusions in \eqref{cod-normality} are satisfied with the second one holding as an equality when ${\rm dim}\,X<\infty$. The property
\begin{equation}\label{cod-nor}
|D_M^*S_\Omega^\Theta(\bar x\vbl\bar y)|^+=|D_N^*S_\Omega^\Theta(\bar x\vbl\bar y)|^+
\end{equation}
is called the {\it coderivative normality} of $S$ {\it relative to} $\Omega \times \Theta$ at $(\bar x,\bar y)\in\operatorname{gph} S|_\Omega^\Theta$. Some conditions ensuring the equalities in \eqref{cod-normality}, and hence the relative coderivative normality \eqref{cod-nor}, can be derived similarly to the unconstrained case as in \cite[Proposition~4.9]{Mordukhovich2006}.

The relative contingent coderivatives \eqref{normalcoder} and \eqref{mixedcoder} are generalized differential constructions, which allow us to provide {\it complete characterizations} of the relative well-posedness notions for constrained multifunctions in Section~\ref{sec:well-posed}. Both of these coderivative multifunctions are {\it positively homogeneous} and enjoy comprehensive {\it calculus rules}, which are developed in Section~\ref{sec:calculus}.

Recall that a mapping $f\colon X\to Y$ is {\it strictly differentiable} at $\bar x$ if
\begin{equation}\label{strict-diff}
\lim\limits_{(x,z)\to (\bar x, \bar x)}\frac{f(x)-f(z)-\nabla f(\bar x)(x-z)}{\|x-z\|_X}=0,
\end{equation}
where $\nabla f(\bar x): X \to Y$ is a bounded linear operator called the {\it strict derivative} of $f$ at $\bar x$. It is clear that \eqref{strict-diff} holds if $f$ is ${\cal C}^1$-smooth around $\bar x$ but not vice versa. 

Given an extended-real-valued function $\varphi\colon X\to\overline{\mathbb R}:=(-\infty,\infty]$ in a Banach space $X$ with $\varphi(\bar x)<\infty$, the (Fr\'echet) {\it regular subdifferential} of $\varphi$ at $\bar x$ is defined by
\begin{equation}\label{reg-sub}
\widehat\partial\varphi(\bar x):=\Big\{x^*\in X^*~\Big|~{ \liminf\limits_{x\to\bar x}}\frac{\varphi(x)-\varphi(\bar x)-\langle x^*,x-\bar x\rangle}{\|x-\bar x\|_X}\ge 0\Big\}.
\end{equation}
This notion is closely related to the regular normal cone $\widehat N(\bar x;\Omega)$ from Definition~\ref{cone} with $\varepsilon=0$. In particular, the regular subdifferential of the indicator function $\iota_\Omega$ at $\bar x\in\Omega$ is $\widehat N(\bar x;\Omega)$. If $\varphi$ is Fr\'echet differentiable at $\bar x$, then \eqref{reg-sub} is a singleton, which is the classical Fr\'echet derivative of $\varphi$ at $\bar x$, while the regular subdifferential of a convex function reduces to the subdifferential of convex analysis.\vspace*{0.03in}

The following proposition follows \cite[Lemma~3.1]{Mordukhovich2006} by replacing $\widehat N(\bar x;\Omega_1\cap\Omega_2)$ there with $\widehat N_\varepsilon(\bar x;\Omega_1\cap\Omega_2)$.

\begin{proposition}\label{fuzzy}
Let $X$ be a reflexive Banach space and $\Omega_1,\;\Omega_2\subset X$ be  locally closed around $\bar x\in\Omega_1\cap\Omega_2$. Suppose that $x^*\in\widehat N_\varepsilon(\bar x;\Omega_1\cap\Omega_2)$ with some $\varepsilon>0$. Then for any $\gamma>0$ there exist $\lambda\geq0$, $x_i\in\Omega_i\cap(\bar x+\gamma B_X)$, and $x_i^*\in \widehat N(x_i;\Omega_i)+(\varepsilon+\gamma) B_{X^*}$, $i=1,2$, such that $\lambda x^*=x^*_1+x^*_2$ and $\max\{\lambda,\|x^*_1\|\}=1$.
\end{proposition}\vspace*{-0.05in}

\section{Relative Well-Posedness Properties and Coderivative Characterizations}\label{sec:well-posed}\vspace*{-0.05in}

In this section, we define the underlying {\it well-posedness} properties of set-valued mappings between Banach spaces {\it relative} to nonempty sets and then provide {\it complete characterizations} of these properties in terms of the {\it relative contingent coderivatives}.

The major attention in our analysis is paid to the following {\it relative Lipschitz-like} property, which is the core of stability theory and applications.

\begin{definition}\label{lip-like} Let $S:X\rightrightarrows Y$ be a set-valued mapping between Banach spaces, $\Omega\subset X$, $\Theta \subset Y$ and $(\bar x,\bar y)\in\operatorname{gph}S|_\Omega^\Theta$. We say that $S$ has the {\sc Lipschitz-like property relative to} $\Omega \times \Theta$ around $(\bar x,\bar y)$ if $\operatorname{gph}S|_\Omega^\Theta$ is closed around this point and there exist neighborhoods $V\in{\cal N}(\bar x)$, $W\in{\cal N}(\bar y)$ and a constant $\kappa\ge 0$ such that\vspace*{-0.05in}
\begin{equation*}
S|^\Theta(x')\cap W \subset S|^\Theta(x) +\kappa\|x'-x\|_X B_Y\;\mbox{ for all }\;x,x'\in \Omega\cap V.
\end{equation*}
The {\sc Lipschitzian modulus} of $S$ {\sc relative to} $\Omega\times \Theta$ around $(\bar x,\bar y)$ is defined by
$$
\begin{aligned}
\operatorname{lip}_\Omega^\Theta S(\bar x,\bar y):=&\inf\big\{\kappa\geq0~\big|~\exists V\in\mathcal{N}(\bar x),\;W\in\mathcal{N}(\bar y)~\operatorname{with}\\
&S|^\Theta(x')\cap W \subset S|^\Theta(x) +\kappa\|x'-x\|_X B_Y\;\mbox{ for all }\;x,x'\in\Omega\cap V\big\}.
\end{aligned}
$$
\end{definition}\vspace*{-0.05in}
Take $x'=\bar x$, it is easy to check that $S|^\Theta(x)\cap W\not=\emptyset$ for all $x\in\Omega\cap V$.

The Lipschitz-like property relative to sets arises from some practical
optimization models. For example, the candidate point under consideration may be at the
boundary of the domain of set-valued mappings. On the other hand, in order to guarantee
some stationarity conditions one may only need a regular behavior of the constrained systems
with respect to one single critical direction, not on the whole space. This property is an extension of the well-known Lipschitz-like (or Aubin's pseudo-Lipschitz) property to constrained mappings, which corresponds to Definition~\ref{lip-like} with $\Omega=X$ and $\Theta=Y$ originally introduced in \cite{a84}; see also \cite{Mordukhovich2006,Rockafellar1998} for more details, references, and applications. The corresponding results for the latter property are restrictive for applications since they require in fact that the reference point belongs to the {\it interior} of the domain. A finite-dimensional version of Definition~\ref{lip-like} appeared in
\cite{Yang2021} in the case of convex sets $\Omega$ and $\Theta=R^m$ and was studied there by using the projectional coderivative. Our approach and results here via the 
relative contingent coderivatives are different from \cite{Yang2021} even in the case of finite-dimensional spaces.

We begin with deriving necessary conditions and sufficient conditions for the relative Lipschitz-like property that involve  points from {\it neighborhoods} of the reference one $(\bar x,\bar y)$. The first result provides a {\it neighborhood necessary condition} valid in arbitrary Banach spaces without the closedness assumptions on $\Omega$.\vspace*{-0.05in}

\begin{lemma}\label{necessity} Let $S:X\rightrightarrows Y$ be a set-valued mapping between Banach spaces $X$ and $Y$, $\Omega\subset X$, $\Theta\subset Y$, and $(\bar x,\bar y)\in\operatorname{gph}S|_\Omega^\Theta$. If $S$ has the Lipschitz-like property relative to $\Omega \times \Theta$ around $(\bar x,\bar y)$ with constant $\kappa\ge 0$, then there exists $\delta>0$ such that
\begin{equation}\label{nece}
\begin{array}{ll}
\qquad\qquad\qquad\|x^*\|_{X^*}\leq\kappa\|y^*\|_{Y^*}+\varepsilon(1+\kappa)\\
\;\mbox{for all }\;(x^*,-y^*)\in\widehat N_\varepsilon\big((x,y);\operatorname{gph} S|_\Omega^\Theta\big)\cap\big[J_X\big(T(x;\Omega)\big)\times J_Y\big(T(y;\Theta)\big)\big]
\end{array}
\end{equation}
whenever $\varepsilon\ge 0$ and $(x,y)\in\operatorname{gph} S|_\Omega^\Theta\cap\big[(\bar x+\delta B_X)\times(\bar y+\delta B_Y)\big]$.
\end{lemma}
\begin{proof}
When $\kappa=0$, there are $V\in\mathcal N(\bar x)$ and $W\in\mathcal N(\bar y)$ such that $S(x)\cap \Theta\cap W$ is a fixed set denoted by $E$ for any $x\in \Omega\cap V$.
Take any $\varepsilon\geq0$, $(x,y)\in V\times W$, and
$$
(x^*,-y^*)\in\widehat N_\varepsilon\big((x,y);\operatorname{gph} S|_\Omega^\Theta \big)\cap\big[J_X\big(T(x;\Omega)\big)\times J_Y\big(T(y;\Theta)\big)\big].
$$
As $S(x)\cap \Theta \cap W=E$ for any $x\in\Omega\cap V$, it is straightforward to check that
$$
\widehat N_\varepsilon\big((x,y);\operatorname{gph} S|_\Omega^\Theta\big)
=\widehat N_\varepsilon\big((x,y);\Omega\times E\big)
\subset\widehat N_\varepsilon(x;\Omega)\times\widehat N_\varepsilon(y;E),
$$
where the last inclusion follows from the proof of \cite[Proposition 1.2]{Mordukhovich2006}. Therefore, $x^*\in\widehat N_\varepsilon(x;\Omega)\cap J_X\big(T(x;\Omega)\big)$, which verifies $\|x^*\|_{X^*}\leq\varepsilon$ {  by Lemma \ref{JTN}}.

Suppose next that $\kappa>0$ and assume that there is $\delta>0$ such that
\begin{equation}\label{nece1}
S(x)\cap\Theta\cap(\bar y+\delta B_Y)\subset S(x')\cap\Theta+\kappa\|x-x'\|_X B_Y\;\mbox{ for all }\;x,x'\in \Omega\cap(\bar x+2\delta B_X).
\end{equation}
For any $\varepsilon\geq0$, $\gamma>0$, $(x,y)\in\operatorname{gph} S|_\Omega^\Theta\cap[(\bar x+\delta B_X)\times (\bar y+\delta B_Y)]$, and
\begin{equation*}
(x^*,-y^*)\in\widehat N_\varepsilon\big((x,y);\operatorname{gph} S|_\Omega^\Theta\big)\cap\big[J_X\big(T(x;\Omega)\big)\times J_Y\big(T(y;\Theta)\big)\big],
\end{equation*}
there exists a positive number $\alpha\leq\min\{\delta,\kappa\delta\}$ with
\begin{equation}\label{nece3}
\langle x^*,x'-x\rangle-\langle y^*,y'-y\rangle\leq(\varepsilon+\gamma)(\|x'-x\|_X+\|y'-y\|_Y)
\end{equation}
whenever $(x',y')\in \operatorname{gph} S|_\Omega^\Theta\cap[(x+\alpha B_X)\times(y+\alpha B_Y)]$. Moreover, it follows from $x^*\in J_X\big(T(x;\Omega)\big)$ that there are $z\in T(x;\Omega)$, $t_k\downarrow0$, and $x_k\stackrel{\Omega}{\longrightarrow}x$ such that
\begin{equation}\label{nece4}
x^*\in J_X(z)~~\operatorname{and}~~\frac{x_k-x}{t_k}\to z\;\mbox{ as }\;k\to\infty.
\end{equation}
Therefore, there is an integer $k_0$ ensuring the estimate $\|x_k-x\|_X<\frac{\alpha}{\kappa}\;\mbox{ for all }\;k>k_0$, which implies in turn that
$\|x_k-\bar x\|_X\leq\|x_k-x\|_X+\|x-\bar x\|_X\leq2\delta$ as $k > k_0$.
Thus we can employ the  Lipschitzian inclusion \eqref{nece1} with $y\in S(x)\cap\Theta\cap(\bar y+\delta B_Y)$ and the chosen element $x_k$. This way allows us to find $y_k\in S(x_k)\cap\Theta$ satisfying
$\|y_k-y\|_Y\leq\kappa\|x_k-x\|_X\leq\kappa\cdot\frac{\alpha}{\kappa}=\alpha,\;k > k_0.$
Substituting $(x_k,y_k)$ into \eqref{nece3} yields
\begin{equation}\label{nece8}
\begin{aligned}
\langle x^*,x_k-x\rangle\le&\|y^*\|_{Y^*}\|y_k-y\|_Y+(\varepsilon+\gamma)(\|x_k-x\|_X+\|y_k-y\|_Y)\\
\leq&\big(\kappa\|y^*\|_{Y^*}+(\varepsilon+\gamma)(1+\kappa)\big)\|x_k-x\|_X\;\mbox{ for all }\;k>k_0.
\end{aligned}
\end{equation}
If $z=0$, then by \eqref{nece4} we have $x^*=0$, and so \eqref{nece} holds automatically. If $z\ne 0$, then $x_k\ne x$ for all $k$ large enough. Using \eqref{nece4} in this case brings us to
\begin{equation*}
\lim\limits_{k\to\infty}\bigg\langle x^*,\frac{x_k-x}{\|x_k-x\|_X}\bigg\rangle
=\bigg\langle x^*,\frac{z}{\|z\|_X}\bigg\rangle=\|x^*\|_{X^*}.
\end{equation*}
This, combined with \eqref{nece8}, yields $\|x^*\|_{X^*}\leq\kappa\|y^*\|_{Y^*}+(\varepsilon+\gamma)(1+\kappa)$.
Since $\gamma>0$ was chosen arbitrarily, this verifies the claimed condition \eqref{nece}.
\end{proof}\vspace*{-0.05in}

Now we are ready to establish a {\it neighborhood sufficient condition} for the relative Lipschitz-like property of multifunctions in terms of $\varepsilon$-normals to graphs as $\varepsilon\ge 0$. The proof of this result requires the {\it reflexivity} of the Banach spaces in question due to the usage of the aforementioned properties of the duality mapping \eqref{dual map}.\vspace*{-0.05in}

\begin{lemma}\label{Sufficiency} Let $S:X\rightrightarrows Y$ be a set-valued mapping between reflexive Banach spaces, and let $\Omega\subset X$ and $\Theta\subset Y$ be  closed and convex with $\gph S|_\Omega^\Theta$ being closed around $(\bar x,\bar y)\in\operatorname{gph}S|_\Omega^\Theta$. Suppose that there exist numbers $\delta,\varepsilon_0>0$ and $\kappa \ge 0$ ensuring that for all $\varepsilon\in[0,\varepsilon_0)$ and $(x,y)\in\operatorname{gph} S|_\Omega^\Theta\cap\big[(\bar x+\delta B_X)\times(\bar y+\delta B_Y)\big]$ we have the dual norm estimate
\begin{equation}\label{suff}
\begin{array}{ll}
\qquad\qquad\qquad\|x^*\|_{X^*}\leq\kappa\|y^*\|_{Y^*}+\sqrt\varepsilon(1+\kappa)\\
\mbox{whenever }\;(x^*,-y^*)\in\widehat N_\varepsilon\big((x,y);\operatorname{gph} S|_\Omega^\Theta\big)\cap\big[J_X\big(T(x;\Omega)\big)\times J_Y\big(T(y;\Theta)\big)\big].
\end{array}
\end{equation}
Then $S$ is Lipschitz-like relative to $\Omega\times\Theta$ around $(\bar x,\bar y)$ with modulus $\kappa$.
\end{lemma}
\begin{proof}
Assume without loss of generality that the set $\operatorname{gph}S|_\Omega^\Theta$ is closed. Suppose on the contrary that $S$ does not have the Lipschitz-like property relative to $\Omega\times \Theta$ around $(\bar x,\bar y)$ with constant $\kappa$. Then for any $\mu\in(0,\delta/5(\kappa+1))$, there exist $x',x''\in\Omega\cap[\bar x+\mu B_X]$ with $x'\not=x''$ and $y''\in S(x'')\cap \Theta\cap[\bar y+\mu B_Y]$ such that $d(y'',S(x')\cap\Theta)>\kappa\|x'-x''\|_X$. This allows us to find $\kappa'\in(\kappa,\kappa+1)$ such that
\begin{equation}\label{lem3.3a}
d(y'',S(x')\cap\Theta)>\kappa'\|x'-x''\|_X.
\end{equation}
Take $\varepsilon\leq(\kappa'-\kappa)^2/4(1+\kappa)^2$. Claiming that \eqref{suff} fails will provide a contradiction.
To proceed, denote $\sigma:=(\kappa'-\kappa)\|x'-x''\|_X/4$ and define
$$
\varphi(x,y):=\frac{\kappa'+\kappa}{2}\|x-x'\|_X+\sqrt{\|y-y''\|_Y^2+\sigma^2}+\iota_{\gph S|_\Omega^\Theta}(x,y),~~~(x,y)\in X\times Y.
$$
By the closedness of \text{gph}~$S|_\Omega^\Theta$ and $(x'',y'') \in \operatorname{gph} S|_\Omega^\Theta$, the function $\varphi$ is proper, l.s.c. and nonnegative on $X\times Y$ with $\inf_{X\times Y}\varphi<\infty$. For $\eta:=\min\{\sigma,\varepsilon\}/2$, there is $(x_\eta,y_\eta)\in X\times Y$ such that $\varphi(x_\eta,y_\eta)\leq\inf_{X\times Y} \varphi+\eta$. Applying the Ekeland variational principle (see, e.g., \cite[Theorem~2.26]{Mordukhovich2006}) gives us a pair $(\tilde{x},\tilde{y})\in X\times Y$ such that
\begin{eqnarray}
\varphi(\tilde{x},\tilde{y}) \leq \varphi(x,y)+\eta\|(x,y)-(\tilde x,\tilde y)\|_{X\times Y}\;\mbox{ for all }\;(x,y)\in X\times Y.\label{align2}
\end{eqnarray}
Define $\phi(x,y):=\eta\|(x,y)-(\tilde x,\tilde y)\|_{X\times Y}$ and observe that $\phi$ is Lipschitz continuous on $X \times Y$. { It follows from \eqref{align2} that $(\tilde{x},\tilde{y})$ is a local minimum of $\varphi + \phi$. Combining this with \cite[Lemma 2.32(i)]{Mordukhovich2006}  ensures the existence of $(x_i,y_i)\in\operatorname{gph} S|_\Omega^\Theta\cap[(\tilde x+\eta B_X)\times (\tilde y+\eta B_Y)]$, $i=1,2$, 
satisfying the inequalities}
\begin{equation}\label{s1}
\varphi(x_i,y_i)\leq\varphi(x_i,y_i)+\phi(x_i,y_i)\leq\varphi(\tilde x,\tilde y)+\phi(\tilde x,\tilde y)+\eta,~~i=1,2,
\end{equation}
together with the subgradient inclusion
\begin{equation}\label{s2}
(0,0)\in\hat \partial\varphi(x_1,y_1)+\hat\partial\phi(x_2,y_2)+\eta B_{X^*\times Y^*}.
\end{equation}
It follows from \eqref{s1} that $\varphi(x_1,y_1)<\infty$, which implies in turn that $(x_1,y_1)\in\gph S|_\Omega^\Theta$. Moreover, \eqref{s1} also tells us that
\begin{equation}\label{s3}
\begin{aligned}
\varphi(x_1,y_1)\leq&\varphi(\tilde x,\tilde y)+\eta\leq\inf_{X\times Y}\varphi+2\eta\leq\varphi(x'',y'')+2\eta\\
=&\frac{\kappa'+\kappa}{2}\|x''-x'\|_X+\sigma+2\eta\leq\kappa'\|x''-x'\|_X.
\end{aligned}
\end{equation}
Observe furthermore that
\begin{equation}\label{s4}
\varphi(x_1,y_1)
\geq\frac{\kappa'+\kappa}{2}
\|x_1-x'\|_X+\|y_1-y''\|_Y
\end{equation}
which allows us deduce from \eqref{s3} and \eqref{s4} the estimates
$$
\begin{aligned}
\|x_1-\bar x\|_X\leq&\|x_1-x'\|_X+\|x'-\bar x\|_X\leq\frac{2}{\kappa'+\kappa}\varphi(x_1,y_1)+\mu\\
\leq&\frac{2\kappa'}{\kappa'+\kappa}\|x''-x'\|_X+\mu\leq5\mu<\delta,
\end{aligned}
$$
$$
\begin{aligned}
\|y_1-\bar y\|_Y\leq&\|y_1-y''\|_Y+\|y''-\bar y\|_Y\leq\varphi(x_1,y_1)+\mu \\
\leq&\kappa'\|x''-x'\|_X+\mu\leq(2\kappa+3)\mu<\delta.
\end{aligned}
$$
and verifies therefore that $(x_1,y_1)\in\gph S|_\Omega^\Theta\cap[(\bar x+\delta B_X)\times(\bar y+\delta B_Y)]$. Next we claim that $x_1\not=x'$. Indeed, supposing $x_1=x'$ gives us by \eqref{lem3.3a}, \eqref{s3}, and \eqref{s4} that
$$
\begin{aligned}
\kappa'\|x'-x''\|_X<&d(y'',S(x')\cap\Theta)=d(y'',S(x_1)\cap\Theta)\leq\|y_1-y''\|_Y\\
\leq&\varphi(x_1,y_1)\leq\kappa'\|x''-x'\|_X,
\end{aligned}
$$
which is a contradiction. Then we know that
$$
\hat\partial\varphi(x_1,y_1)=\left\{\left(\frac{(\kappa'+\kappa)J_X(x_1-x')}{2\|x_1-x'\|_X},
\frac{J_Y(y_1-y'')}{\sqrt{\|y_1-y''\|_Y^2+\sigma^2}}\right)\right\}+\widehat N((x_1,y_1);\gph S|_\Omega^\Theta)
$$
and notice that $\hat\partial\phi(x_2,y_2)\subset\eta B_{X^*\times Y^*}$. Taking
$$
x^*:=\frac{(\kappa'+\kappa)J_X(x'-x_1)}{2\|x_1-x'\|_X}~~\mbox{and}~~y^*:=\frac{J_Y(y_1-y'')}{\sqrt{\|y_1-y''\|_Y^2+\sigma^2}},
$$
we get $(x^*,-y^*)\in J_X\big(T(x_1;\Omega)\big)\times J_Y\big(T(y_1;\Theta)\big)$ by the facts that $x' \in \Omega$, $y'' \in \Theta$, homogeneity of $J$, and the sets $\Omega$ and $\Theta$ are convex. Moreover, it follows from \eqref{s2} that
$(x^*,-y^*)\in\; \widehat N((x_1,y_1);\gph S|_\Omega^\Theta)+2\eta B_{X^*\times Y^*}$
$\subset\; \widehat N((x_1,y_1);\gph S|_\Omega^\Theta)+\varepsilon B_{X^*\times Y^*}\subset\widehat N_\varepsilon((x_1,y_1);\gph S|_\Omega^\Theta).$ Due to the estimates
$$
\|x^*\|_{X^*}=\frac{\kappa'+\kappa}{2}=\kappa+\frac{\kappa'-\kappa}{2}>\kappa\|y^*\|_{Y^*}
+\frac{\kappa'-\kappa}{2(\kappa+1)}(\kappa+1)\geq\kappa\|y^*\|_{Y^*}+\sqrt\varepsilon(\kappa+1),
$$
we arrive at a  contradiction with  \eqref{suff} and completes the proof of the lemma.
\end{proof}\vspace*{-0.02in}

Now we show that if both duality mappings of reflexive Banach spaces $X$ and $Y$ are uniformly continuous on their unit spheres, then the same conclusion as in Lemma~\ref{Sufficiency} holds when $\Omega$ and $\Theta$ are $C^2$-manifolds. Recall that a set $\Omega$ is a {\it $C^m$-manifold} ($i=1,2$) if for any $x\in\Omega$ there exist $U\in\mathcal N(x)$ and a $C^m$-homeomorphism $\psi:X_\psi\to\Omega\cap U$ for some Banach space $X_\psi$, where $\psi$ is a {\it $C^m$-homeomorphism} if both $\psi$ and $\psi^{-1}$ are of class $C^m$. It is easy to show that for such $\Omega$ and $\bar x\in\Omega$ with $\psi(0)=\bar x$, we have the relationships
\begin{eqnarray}
&&T(\bar x;\Omega)=\{\nabla\psi(0)\xi~|~\xi\in X_\psi\} \label{tangent}\;\mbox{ and }\\
&&w^*\mbox{-}\limsup\limits_{x\stackrel{\Omega}{\longrightarrow}\bar x}J_X\big(T(x;\Omega)\big)\subset \co J_X\big(T(\bar x;\Omega)\big).\label{tangent-limit}
\end{eqnarray}

\begin{lemma}\label{mani}
Let $X$ be a reflexive Banach space with its duality mapping $J_X$ being uniformly continuous on the unit sphere $\mathbb S_X$, and let $\Omega\subset X$. Given $\bar x\in \Omega$, assume that there exist $U\in\mathcal N(\bar x)$ and a $C^2$-homeomorphism $\psi:X_\psi\to \Omega \cap U$ with  a Banach space $X_\psi$ and $\psi(0)=\bar x$. Then for any $\varepsilon>0$, we find $\delta>0$ such that
$$
d\left(\frac{J_X(x'-x)}{\|x'-x\|_X},J_X(T(x;\Omega)\cap\mathbb S_X)\right)<\varepsilon\;\mbox{ whenever }\;x,x'\in\Omega\cap[\bar x+\delta B_X]~\mbox{with}~x\ne x'.
$$
\end{lemma}
\begin{proof}
Let $\varepsilon>0$. The uniformly continuity and homogeneity of $J_X$ on the unit sphere $\mathbb S_X$ guarantee that there exists $\varrho\in(0,1)$ such that
\begin{equation}\label{mani1}
\|J_X(x')-J_X(x)\|_{X^*}\leq\varepsilon\;\mbox{ for all }\;x,x'\in\mathbb S_X~\mbox{with}~\|x-x'\|_X\leq\varrho.
\end{equation}
Since $\psi$ is an $C^2$-homeomorphism, we can find positive numbers $\eta,L,\sigma$ with
\begin{eqnarray}
&&  \|\psi^{-1}(x)-\psi^{-1}(x')\|_{X_\psi}\leq L\|x-x'\|_X\;\mbox{ if }\;x,x'\in\Omega\cap[\bar x+\eta B_X],\label{mani2}\\
&&\|\psi(\xi')-\psi(\xi)-\nabla\psi(\xi)(\xi'-\xi)\|_{X}\leq\frac{\varrho}{2L}\|\xi-\xi'\|_{X_{\psi}}\;\mbox{ if }\;\xi,\xi'\in\sigma B_{X_\psi}.\label{mani3}
\end{eqnarray}
Taking $\delta:=\min\{\eta,\sigma/L\}$ and $x,x'\in\Omega\cap[\bar x+\delta B_X]$ with $x\ne x'$, let $\xi:=\psi^{-1}(x)$ and $\xi':=\psi^{-1}(x')$. Then it follows from \eqref{mani2} that $\xi,\xi'\in\sigma B_{X_\psi}$ with $\xi\ne\xi'$. Employing \eqref{mani2} gives us the relationships
\begin{equation}\label{mani4}
\|\psi(\xi)-\psi(\xi')\|_X=\|x-x'\|_X\geq\frac{1}{L}\|\psi^{-1}(x)-\psi^{-1}(x')\|_{X_{\psi}}
=\frac{1}{L}\|\xi-\xi'\|_{X_\psi}>0,
\end{equation}
which being combined with \eqref{mani3} yield
\begin{equation}\label{mani4a}
\|\nabla\psi(\xi)(\xi'-\xi)\|_X\geq\left(\frac{1}{L}-\frac{\varrho}{2L}\right)\|\xi-\xi'\|_{X_\psi}
\geq\frac{1}{2L}\|\xi-\xi'\|_{X_\psi}>0.
\end{equation}
It follows from \eqref{mani3} and \eqref{mani4} 
$$
\left\|\frac{\psi(\xi')-\psi(\xi)}{\|\psi(\xi')-\psi(\xi)\|_X}
-\frac{\nabla\psi(\xi)(\xi'-\xi)}{\|\psi(\xi')-\psi(\xi)\|_X}\right\|_X\leq\frac{\varrho}{2}.
$$
Moreover, \eqref{mani3}, \eqref{mani4} and \eqref{mani4a} tell us that
$$
\left\|\frac{\nabla\psi(\xi)(\xi'-\xi)}{\|\psi(\xi')-\psi(\xi)\|_X}
-\frac{\nabla\psi(\xi)(\xi'-\xi)}{\|\nabla\psi(\xi)(\xi'-\xi)\|_X}\right\|_X\leq\frac{\varrho}{2}.
$$
Therefore, we arrive at the estimate
$$
\left\|\frac{x'-x}{\|x'-x\|_X}
-\frac{\nabla\psi(\xi)(\xi'-\xi)}{\|\nabla\psi(\xi)(\xi'-\xi)\|_X}\right\|_X\leq\varrho.
$$
Unifying the latter with \eqref{tangent} and \eqref{mani1} results in 
$$
d\left(\frac{J_X(x'-x)}{\|x'-x\|_X},J_X(T(x;\Omega)\cap\mathbb S_X)\right)<\varepsilon,
$$
which completes the proof of the lemma.
\end{proof}

We know from \cite[Theorem 1.2]{Barbu1993} that the duality mapping $J_X$ of a reflexive Banach space $X$ with the uniformly convex dual space $X^*$ is uniformly continuous on $\mathbb S_X$. 
The next lemma provides a {\it neighborhood sufficient condition} for the {\it relative Lipschitz-like property} of a set-valued mapping on {\it manifolds}. Its proof is similar to that of Lemma \ref{Sufficiency} by applying Lemma~\ref{mani}, the Ekeland variational principle and the subgradient description of the extremal principle from \cite[Lemma 2.32(i)]{Mordukhovich2006}. Thus it is omitted.\vspace*{-0.02in}

\begin{lemma}\label{Sufficiency'} Let $S:X\rightrightarrows Y$ be a set-valued mapping between reflexive Banach spaces, and let $\Omega\subset X$ and $\Theta\subset Y$ be $C^2$-manifolds such that $\gph S|_\Omega^\Theta$ is closed around $(\bar x,\bar y)\in\operatorname{gph}S|_\Omega^\Theta$. Suppose that both $X^*$ and $Y^*$ are uniformly convex, and that there exist numbers $\delta,\varepsilon_0>0$ and $\kappa \ge 0$ ensuring that for all $\varepsilon\in[0,\varepsilon_0)$ and $(x,y)\in\operatorname{gph} S|_\Omega^\Theta\cap\big[(\bar x+\delta B_X)\times(\bar y+\delta B_Y)\big]$ we have
\begin{equation}\label{suff'}
\begin{array}{ll}
\|x^*\|_{X^*}\leq\kappa\|y^*\|_{Y^*}+\sqrt\varepsilon(1+\kappa)\;\mbox{ whenever }\\
(x^*,-y^*)\in\widehat N_\varepsilon\big((x,y);\operatorname{gph} S|_\Omega^\Theta\big)\cap\big[J_X\big(T(x;\Omega)\big)\times J_Y\big(T(y;\Theta)\big)\big].
\end{array}
\end{equation}
Then $S$ is Lipschitz-like relative to $\Omega\times\Theta$ around $(\bar x,\bar y)$ with any constant $\kappa'>\kappa$.
\end{lemma}\vspace*{-0.02in}

Our main goal here is to derive {\it necessary and sufficient conditions} for the relative Lipschitz-like property expressed, in contrast to the neighborhood results of Lemmas~\ref{necessity} and \ref{Sufficiency}, via generalized differential constructions depending only on the {\it point in question}. To establish such {\it pointbased} characterizations, we  define the following {\it relative partial sequential normal compactness} condition (relative PSNC), which extends the standard PSNC one from \cite{Mordukhovich2006} to the case of constrained multifunctions.\vspace*{-0.02in}

\begin{definition}\label{psnc} Let $S:X\rightrightarrows Y$ be a multifunction between Banach spaces, $\Omega\subset X$, and $\Theta\subset Y$. We say that $S$ is {\sc PSNC relative to} $\Omega\times\Theta$ at $(\bar x,\bar y)\in\operatorname{gph}S|_\Omega^\Theta$ if for any sequence $(\varepsilon_k,x_k,y_k,x_k^*,y_k^*)\in (0,1)\times\operatorname{gph} S|_\Omega^\Theta\times X^*\times Y^*$ satisfying
\begin{equation}\label{PSNC}
\begin{aligned}
&\varepsilon_k\downarrow0,\;(x_k,y_k)\to(\bar x,\bar y),\;x_k^*\rightharpoonup^*0,\;\operatorname{and}~y_k^*\to 0\;\mbox{ as }\;k\to\infty,\;\mbox{ and}\\
&(x_k^*,-y_k^*)\in \widehat N_{\varepsilon_k}\big((x_k,y_k);\operatorname{gph} S|_\Omega^\Theta\big)\cap[J_X\big(T(x_k;\Omega)\big)\times J_Y\big(T(y_k;\Theta)\big)],
\end{aligned}
\end{equation}
the strong $X^*$-convergence $\|x_k^*\|_{X^*}\to 0$ as $k\to \infty$ holds. In particular, we say $S$ is PSNC relative to $\Omega$ at $(\bar x,\bar y)$ when $\Theta=Y$.

In addition, we say that $S$ is {\sc mirror PSNC relative to} $\Omega\times\Theta$ at $(\bar x,\bar y)\in\operatorname{gph}S|_\Omega^\Theta$ if for any sequence $(\varepsilon_k,x_k,y_k,x_k^*,y_k^*)\in (0,1)\times\operatorname{gph} S|_\Omega^\Theta\times X^*\times Y^*$ satisfying (\ref{PSNC}) with $x_k^*\rightharpoonup^*0,\;\operatorname{and}~y_k^*\to 0$ being replaced by $x_k^*\to0, \mbox{ and } y_k^*\rightharpoonup^* 0,$
the strong $Y^*$-convergence $\|y_k^*\|_{Y^*}\to 0$ as $k\to \infty$ holds. In particular, we say $S$ is mirror PSNC relative to $\Omega$ at $(\bar x,\bar y)$ when $\Theta=Y$.
\end{definition}\vspace*{-0.03in}

We obviously have that the PSNC property of $S|_\Omega^\Theta$ ensures that $S$ is PSNC relative to $\Omega\times \Theta$. However, the converse can fail in general. In addition, from the definitions of the mixed contingent coderivative and the relative PSNC property, we can directly deduce the conclusions of the next lemma.\vspace*{-0.02in}

\begin{lemma}\label{mirror}
 Let $S:X\rightrightarrows Y$ be a multifunction between Banach spaces, $\Omega\subset X$, and $\Theta\subset Y$. Given $(\bar x,\bar y)\in\gph S|_\Omega^\Theta$, the following assertions are satisfied:

{\bf (i)} $S^{-1}$ is PSNC relative to $\Theta\times\Omega$ at $(\bar y,\bar x)$ if and only if $S$ is mirror PSNC relative to $\Omega\times\Theta$ at $(\bar x,\bar y)$.

{\bf (ii)} For any $x^*\in X^*$ and $y^*\in Y^*$, we have the equivalence
$$
x^*\in D_m^*S_\Omega^\Theta(\bar x\vbl\bar y)(y^*)~\Longleftrightarrow~-y^*\in D_M^*(S^{-1})_\Theta^\Omega(\bar y\vbl\bar x)(-x^*).
$$
\end{lemma}

If  $S$ is strictly differentiable at $\bar x$, it is Lipschitz-like around that point, { and hence $S$ is PSNC therein; see \cite[Theorem 4.10]{Mordukhovich2006}. Definition~\ref{psnc} immediately implies that the relative PSNC property is automatic if dim $X<\infty$. Observe that when $\bar x\in{\rm int}\,\Omega$ and $\bar y\in{\rm int}\,\Theta$, the intersection $\widehat N_{\varepsilon_k}\big((x_k,\hskip-0.05cm u_k);\operatorname{gph} S|_\Omega^\Theta\big)\cap[J_X\big(T(x_k;\Omega)\big)\times J_Y\big(T(y_k;\Theta)\big)]$ reduces to $\widehat N_{\varepsilon_k}\big((x_k,\hskip-0.05cm  u_k);\operatorname{gph} S\big)$, and thus the relative PSNC agrees in this case with the standard PSNC from \cite{Mordukhovich2006} where the set $\Omega$ and $\Theta$ are not present at all. It will be shown in Section~\ref{sec:calculus} that the newly introduced relative PSNC, similarly to the standard PSNC in \cite{Mordukhovich2006}, {\it is preserved} under major operations over multifunctions. Note to this end that the essentially more restrictive SNC property for sets and set-valued mappings (not considered in this paper) may not hold for some convex sets in Sobolev spaces important in applications to PDE control problems; see \cite{Mehlitz2019} for more details}.\vspace*{0.05in}

Here is the main result of this section providing {\it pointbased characterizations} of the relative Lipschitz-like property and the evaluation of its {\it exact Lipschitzian bound}.\vspace*{-0.03in}

\begin{theorem}\label{criterion} Let $S:X\rightrightarrows Y$ be a set-valued mapping between reflexive Banach spaces $X$ and $Y$, and let $\Omega$ and $\Theta$ be a closed and convex subset of $X$ and $Y$, respectively. Suppose that $(\bar x,\bar y)\in\operatorname{gph} S|_\Omega^\Theta$ and the set $\operatorname{gph} S|_\Omega^\Theta$ is closed around $(\bar x,\bar y)$. Then the following assertions are equivalent:

{\bf(i)} $S$ is Lipschitz-like relative to $\Omega \times \Theta$ around $(\bar x,\bar y)$.

{\bf(ii)} $S$ is PSNC relative to $\Omega \times \Theta$ at $(\bar x,\bar y)$, and $|D_M^*S_\Omega^\Theta(\bar x\vbl\bar y)|^+<\infty$.

{\bf(iii)} $S$ is PSNC relative to $\Omega\times\Theta$ at $(\bar x,\bar y)$, and $D_M^*S_\Omega^\Theta(\bar x\vbl\bar y)(0)=\{0\}$.\\[0.5ex]
Moreover, under the fulfillment of {\rm(i)--(iii)}, the exact Lipschitzian bound of $S$ relative to $\Omega$ around $(\bar x,\bar y)$ satisfies the estimates
\begin{equation}\label{inequ-estimate}
|D_M^*S_\Omega^\Theta(\bar x\vbl\bar y)|^+\leq \operatorname{lip}_\Omega^\Theta S(\bar x,\bar y)\leq |D_N^*S_\Omega^\Theta(\bar x\vbl\bar y)|^+,
\end{equation}
where the upper estimate holds when $\operatorname{dim} X<\infty$. If in addition the mapping $S$ is coderivatively normal relative to $\Omega \times \Theta$ at $(\bar x,\bar y)$, then we have
\begin{equation}\label{equ-estimate}
\operatorname{lip}_\Omega^\Theta S(\bar x,\bar y)=|D_M^*S_\Omega^\Theta(\bar x\vbl\bar y)|^+=  |D_N^*S_\Omega^\Theta(\bar x\vbl\bar y)|^+.
\end{equation}
\end{theorem}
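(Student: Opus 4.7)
The plan is to close the cycle (i)$\Longrightarrow$(ii)$\Longrightarrow$(iii)$\Longrightarrow$(i) via Lemmas \ref{necessity} and \ref{Sufficiency}, extracting the quantitative bounds \eqref{inequ-estimate}--\eqref{equ-estimate} from the same argument. For (i)$\Longrightarrow$(ii) together with the lower estimate $|D_M^*S_\Omega(\bar x|\bar y)|^+\le\operatorname{lip}_\Omega S(\bar x,\bar y)$, I would apply inequality \eqref{nece} along the sequences defining $D_M^*S_\Omega(\bar x|\bar y)(\bar y^*)$: with $y_k^*\to\bar y^*$ strongly, $x_k^*\rightharpoonup^*\bar x^*$, and any $\kappa>\operatorname{lip}_\Omega S(\bar x,\bar y)$, Lemma \ref{necessity} yields $\|x_k^*\|_{X^*}\le\kappa\|y_k^*\|_{Y^*}+\varepsilon_k(1+\kappa)$, and weak$^*$ lower semicontinuity of $\|\cdot\|_{X^*}$ delivers $\|\bar x^*\|_{X^*}\le\kappa\|\bar y^*\|_{Y^*}$ in the limit. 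The same inequality, specialized to a relative PSNC test sequence with $y_k^*\to 0$ strongly, forces $\|x_k^*\|_{X^*}\to 0$, which is the relative PSNC property. The implication (ii)$\Longrightarrow$(iii) is immediate from the positive homogeneity of $D_M^*S_\Omega(\bar x|\bar y)$: any nonzero element of $D_M^*S_\Omega(\bar x|\bar y)(0)$ would generate a ray and make $|D_M^*|^+$ infinite.

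For (iii)$\Longrightarrow$(i) I plan a contradiction argument against Lemma \ref{Sufficiency}. If $S$ had no Lipschitz-like property relative to $\Omega$, then for each $n\in\mathbb N$ condition \eqref{suff} would fail with $\kappa=n$ and $\delta=\varepsilon_0=1/n$, producing $\varepsilon_n\in[0,1/n)$, a point $(x_n,y_n)\stackrel{\operatorname{gph} S|_\Omega}{\longrightarrow}(\bar x,\bar y)$, and a pair $(x_n^*,-y_n^*)\in\widehat N_{\varepsilon_n}((x_n,y_n);\operatorname{gph} S|_\Omega)\cap[J_X(T(x_n;\Omega))\times Y^*]$ satisfying $\|x_n^*\|_{X^*}>n\|y_n^*\|_{Y^*}+\sqrt{\varepsilon_n}(1+n)$. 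Normalizing $\|x_n^*\|_{X^*}=1$ immediately forces $\|y_n^*\|_{Y^*}<1/n\to 0$ strongly and $\sqrt{\varepsilon_n}<1/(n+1)\to 0$. Reflexivity and the Banach--Alaoglu theorem then deliver a weak$^*$ subsequential limit $x^*$ of $x_n^*$, and the outer-limit structure of \eqref{mixedcoder} places $x^*\in D_M^*S_\Omega(\bar x|\bar y)(0)=\{0\}$ thanks to hypothesis (iii). The relative PSNC property finally forces $\|x_n^*\|_{X^*}\to 0$, which contradicts the normalization.

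To establish the upper bound $\operatorname{lip}_\Omega S(\bar x,\bar y)\le|D_N^*S_\Omega(\bar x|\bar y)|^+$ when $\dim X<\infty$, I would repeat the previous contradiction with a single fixed $\kappa>|D_N^*S_\Omega(\bar x|\bar y)|^+$ in place of the diverging $\kappa_n=n$. Now $\|y_n^*\|_{Y^*}<1/\kappa$ is merely bounded, but $\varepsilon_n\to 0$ still holds; finite-dimensionality of $X^*$ promotes the weak$^*$ extraction into norm convergence $x_n^*\to x^*$ with $\|x^*\|_{X^*}=1$, while a weak$^*$ subsequential limit $y^*$ of $y_n^*$ with $\|y^*\|_{Y^*}\le 1/\kappa$ gives $x^*\in D_N^*S_\Omega(\bar x|\bar y)(y^*)$ through \eqref{normalcoder}. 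Then $1=\|x^*\|_{X^*}\le|D_N^*|^+\|y^*\|_{Y^*}\le|D_N^*|^+/\kappa<1$ is the desired contradiction. Combining this upper bound with the previously-established lower bound and the coderivative normality identity $|D_M^*|^+=|D_N^*|^+$ produces the triple equality \eqref{equ-estimate}.

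The main obstacle, I expect, is the delicate interplay between the $\sqrt{\varepsilon}$ slack in \eqref{suff} and the normalization $\|x_n^*\|_{X^*}=1$: it is precisely the square-root rate that guarantees $\varepsilon_n\to 0$ in the diverging-$\kappa_n$ regime of (iii)$\Longrightarrow$(i), without which the weak$^*$ limit $x^*$ would fail to qualify as an element of $D_M^*S_\Omega(\bar x|\bar y)(0)$ and the contradiction with (iii) would collapse. A secondary subtlety is that the joint membership in $\widehat N_{\varepsilon_n}$ and in $J_X(T(x_n;\Omega))\times Y^*$ must persist under the weak$^*$ passage to the limit; this is absorbed into the outer-limit definitions \eqref{mixedcoder}--\eqref{normalcoder}, but in the upper-bound step the reliance on $D_N^*$ rather than $D_M^*$ (because only weak$^*$-boundedness, not strong convergence, of $y_n^*$ is available) is what pins the cleanest form of the upper estimate to finite-dimensional $X$.
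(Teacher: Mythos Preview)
Your proof is correct and mirrors the paper's argument step for step: Lemma~\ref{necessity} for (i)$\Rightarrow$(ii) and the lower bound, positive homogeneity for (ii)$\Rightarrow$(iii), and a contradiction to Lemma~\ref{Sufficiency} via normalization and weak$^*$ sequential compactness for (iii)$\Rightarrow$(i) and the upper bound. The one bookkeeping wrinkle is in your normalization step: dividing by $\|x_n^*\|$ changes the $\varepsilon$-normal parameter to $\hat\varepsilon_n=\varepsilon_n/\|x_n^*\|$, and the rescaled inequality does \emph{not} directly yield your claimed bound $\sqrt{\varepsilon_n}<1/(n+1)$ (the square root does not scale linearly); what actually gives $\hat\varepsilon_n\to 0$---and what the paper does---is the estimate $\hat\varepsilon_n\le\varepsilon_n/\big(\sqrt{\varepsilon_n}(1+n)\big)=\sqrt{\varepsilon_n}/(1+n)$ coming from $\|x_n^*\|>\sqrt{\varepsilon_n}(1+n)$, which is precisely the mechanism you identify in your closing paragraph.
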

\begin{proof}
{\bf[(i)$\Longrightarrow$(ii)]:} Let $S$ be Lipschitz-like property relative to $\Omega\times \Theta$ around $(\bar x,\bar y)$ with constant $\kappa$. Take any sequence $(\varepsilon_k,x_k,y_k,x_k^*,y_k^*)\in (0,1)\times\operatorname{gph} S|_\Omega^\Theta\times X^*\times Y^*$ satisfying \eqref{PSNC} and deduce from Lemma~\ref{necessity} that
\begin{equation*}
\|x_k^*\|_{X^*}\le\kappa\|y_k^*\|_{Y^*}+\varepsilon_k(1+\kappa)\;\mbox{  for sufficiently large }\;k.
\end{equation*}
Thus $x_k^*\to 0$ as $k\to\infty$, which shows that $S$ is PSNC relative to $\Omega\times\Theta$ at $(\bar x,\bar y)$.

To verify the second assertion in {\bf (ii)}, take any $(x^*, y^*)\in X^*\times Y^*$ with $x^*\in D_M^*S_\Omega^\Theta(\bar x\vbl\bar y)(y^*)$. Then there exist sequences $\varepsilon_k\downarrow0$, $(x_k,y_k)\stackrel{\operatorname{gph} S|_\Omega^\Theta}{\longrightarrow}(\bar x,\bar y)$, and
\begin{equation*}
(x_k^*,-y_k^*)\in\widehat N_{\varepsilon_k}\big((x_k,y_k);\operatorname{gph} S|_\Omega^\Theta\big)\cap\big[J_X\big(T(x_k;\Omega)\big)\times Y^*\big],\quad k\in\mathbb N,
\end{equation*}
such that $y_k^*\to y^*$ and $x_k^*\rightharpoonup^* x^*$. It follows from Lemma~\ref{necessity} that $\|x_k^*\|_{X^*}\le\kappa\|y_k^*\|_{Y^*}+\varepsilon_k(1+\kappa)$ for all $k$ sufficiently large. Thus we have
$$
\|x^*\|_{X^*}\leq\liminf\limits_{k\to\infty}\|x_k^*\|_{X^*}
\leq\liminf\limits_{k\to\infty}\big(\kappa\|y_k^*\|_{Y^*}+\varepsilon_k(1+\kappa)\big)
=\kappa\|y^*\|_{Y^*},
$$
and hence $|D_M^*S_\Omega^\Theta(\bar x\vbl\bar y)|^+\leq\kappa$. Note that the obtained implication as well as the lower estimate in \eqref{inequ-estimate} hold in any Banach spaces $X$ and $Y$.

{\bf[(ii)$\Longrightarrow$(iii)]}: This is obvious since $D_M^*S_\Omega^\Theta(\bar x\vbl\bar y)$ is positively homogeneous.

{\bf[(iii)$\Longrightarrow$(i)]}: Suppose that $S$ is not Lipschitz-like relative to $\Omega\times\Theta$ around $(\bar x,\bar y)$. By Lemma~\ref{Sufficiency}, there exist sequences $\varepsilon_k\downarrow0$, $(x_k,y_k)\stackrel{\operatorname{gph} S|_\Omega^\Theta}{\longrightarrow}(\bar x,\bar y)$, and $(x_k^*,-y_k^*)\in\widehat N_{\varepsilon_k}\big((x_k,y_k);\operatorname{gph} S|_\Omega^\Theta\big)\cap\big[J_X\big(T(x_k;\Omega)\big)\times J_Y\big(T(y_k;\Theta)\big)\big]$ such that
\begin{equation}\label{mainp1}
\|x^*_k\|_{X^*}>k\|y_k^*\|_{Y^*}+\sqrt{\varepsilon_k}(1+k)\;\mbox{ whenever }\;k\in\mathbb N,
\end{equation}
which ensures that $\|x^*_k\|_{X^*}\ne 0$ for all $k$. Define the modified sequences
\begin{equation}\label{crit0}
\hat x_k^*:=\frac{x_k^*}{\|x_k^*\|_{X^*}},~~\hat y_k^*=\frac{y_k^*}{\|x_k^*\|_{X^*}},~~\operatorname{and}~~\hat\varepsilon_k=\frac{\sqrt{\varepsilon_k}}{\|x_k^*\|_{X^*}}
\end{equation}
with  $\|\hat x_k^*\|_{X^*}=1$ for all $k$. By \eqref{mainp1}, we have $\hat y_k^*\to0$ and  $\hat \varepsilon_k\to0$. Furthermore, it follows from the positive homogeneity of $\widehat N_\varepsilon(\bar x;\Omega)$ with respect to $\epsilon$ that
$$
(\hat x_k^*,-\hat y_k^*)\in\widehat N_{\hat \varepsilon_k}\big((x_k,y_k);\operatorname{gph} S|_\Omega^\Theta\big)\cap\big[J_X\big(T(x_k;\Omega)\big)\times J_Y\big(T(y_k;\Theta)\big)\big],\quad k\in\mathbb N.
$$
By passing to a subsequence if necessary, we assume that there is $x^*\in B_{X^*}$ such that $\hat x_k^*\rightharpoonup^* x^*$. Then the imposed coderivative condition $D_M^*S_\Omega^\Theta(\bar x\vbl\bar y)(0)=\{0\}$ tells us that $x^*=0$, i.e., $\hat x_k^*\rightharpoonup^*0$. Since $S$ is PSNC at $(\bar x,\bar y)$ relative to $\Omega\times\Theta$, we get that $\hat x_k^*\to 0$ as $k\to\infty$, which contradicts $\|\hat x_k^*\|_{X^*}=1$ and thus verifies (i).

It is clear that the lower estimate in \eqref{inequ-estimate} holds in any Banach spaces. Let us now justify that the upper one therein is satisfied if $Y$ is reflexive while $X$ is finite-dimensional. Fixing any $\kappa>|D_N^*S_\Omega^\Theta(\bar x\vbl\bar y)|^+$, we claim that there exists $\delta>0$ such that \eqref{suff} holds for any $\varepsilon>0$ sufficiently small. Indeed, the contrary means that there exist sequences $\varepsilon_k\downarrow0$, $(x_k,y_k)\stackrel{\operatorname{gph} S|_\Omega^\Theta}{\longrightarrow}(\bar x,\bar y)$ as $k\to\infty$, and
$$
(x_k^*,y_k^*)\in\widehat N_{\varepsilon_k}\big((x_k,y_k);\operatorname{gph} S|_\Omega^\Theta\big)\cap\big[J_X\big(T(x_k;\Omega)\big)\times J_Y\big(T(y_k;\Theta)\big)\big]
$$
along which we have the estimate
\begin{equation}\label{crit1}
\|x_k^*\|_{X^*}>\kappa\|y_k^*\|_{Y_*}+\sqrt{\varepsilon_k}(1+\kappa)\;\mbox{ for all }\;k.
\end{equation}
Defining $\hat x_k^*$, $\hat y_k^*$, and $\hat \varepsilon_k$ as in \eqref{crit0} gives us $\|\hat x_k^*\|_{X^*}=1$, $\|\hat y_k^*\|_{Y^*}\le\kappa^{-1}$, and
$$
(\hat x_k^*,-\hat y_k^*)\in\widehat N_{\hat \varepsilon_k}\big((x_k,y_k);\operatorname{gph} S|_\Omega^\Theta\big)\cap\big[J_X\big(T(x_k;\Omega)\big)\times J_Y\big(T(y_k;\Theta)\big)\big].
$$
Furthermore, it follows from \eqref{crit1} that
$$
\hat\varepsilon_k\leq\frac{\varepsilon_k}{\sqrt{\varepsilon_k}(1+\kappa)}
\leq\frac{\sqrt{\varepsilon_k}}{1+\kappa}\to 0\;\mbox{ as }\;k\to\infty.
$$
Since dim $X<\infty$ and $Y$ is reflexive, there exists $(x^*,y^*)\in X^*\times Y^*$ for which $\hat x_k^*\to x^*$ and $\hat y_k^*\rightharpoonup^*y^*$ along some subsequences. Then $x^*\in D_N^*S_\Omega^\Theta(\bar x\vbl\bar y)(y^*)$ and
$$
\|y^*\|_{Y^*}\leq\liminf\limits_{k\to\infty}\|y_k^*\|_{Y^*}
\leq\frac{1}{\kappa}=\frac{1}{\kappa}\|x^*\|_{X^*}
$$
due to the continuity of the norm function in finite-dimensional spaces and its lower
semicontinuity in the weak$^*$ topology of $Y^*$ for any reflexive Banach space. This contradicts the condition $\kappa>|D_N^*S_\Omega^\Theta(\bar x\vbl\bar y)|^+$ and thus verifies the upper estimate in \eqref{inequ-estimate}. If $S$ is coderivatively normal relative to $\Omega\times\Theta$ at $(\bar x,\bar y)$, the equalities in \eqref{equ-estimate} follow directly from the definition and the estimates in \eqref{inequ-estimate}.
\end{proof}\vspace*{-0.02in}

Applying Lemma \ref{Sufficiency'} confirms that the results of Theorem~\ref{criterion} hold true whenever {\it both sets $\Omega$ and $\Theta$ are $C^2$-manifolds, $X^*$ and $Y^*$ are uniformly convex}. This allows us to derive the following consequence of the theorem. Observe first that for any mapping $S:X\rightrightarrows Y$ between two reflexive Banach spaces and any $(\bar x,\bar y)\in\gph S|_\Omega^\Theta$ with $\Omega\subset X$ and $\Theta\subset Y$, we have from \eqref{in} and the definition of the mixed contingent coderivative that
$$
D_M^*S_\Omega^\Theta(\bar x\vbl\bar y)(y^*)\subset D_M^*S|_\Omega^\Theta(\bar x\vbl\bar y)(y^*),~~~\forall y^*\in Y^*.
$$
If moreover $\Omega$ is a $C^1$-manifold, then it follows from \eqref{tangent-limit} that
$$
D_M^*S_\Omega^\Theta(\bar x\vbl\bar y)(0)\subset\co J_X\big(T(\bar x;\Omega)\big).
$$
Having this in hand, we arrive at the following upper estimate of the {\it mixed contingent coderivative} via the usual coderivative of the {\it truncated restricted} mapping and the convex full of the {\it tangent cone image} under the duality mapping.\vspace*{-0.02in}

\begin{corollary}\label{upper-suff}
Let $X$ and $Y$ be reflexive Banach spaces whose dual spaces $X^*$ and $Y^*$ are uniformly convex. Given a set-valued mapping $S:X\rightrightarrows Y$ and sets $\Omega\subset X$ and $\Theta\subset Y$ that both are $C^2$-manifolds, assume that 
$\gph S|_\Omega^\Theta$ is closed around  $(\bar x,\bar y)\in\gph S|_\Omega^\Theta$. Then we have the inclusion
 $$
D_M^*S_\Omega^\Theta(\bar x\vbl\bar y)(0)\subset D_M^*S|_\Omega^\Theta(\bar x\vbl\bar y)(0)\cap\co J_X\big(T(\bar x;\Omega)\big).
$$
If furthermore $S$ is PSNC relative to $\Omega\times \Theta$ at $(\bar x,\bar y)$ and  
$$
D_M^*S|_\Omega^\Theta(\bar x\vbl\bar y)(0)\cap\co J_X\big(T(\bar x;\Omega)\big)=\{0\},
$$
then $S$ is Lipschitz-like relative to $\Omega\times \Theta$ around $(\bar x,\bar y)$.
\end{corollary}

In the case where $\dim X\cdot\dim Y<\infty$ and $\Omega=X$, $\Theta=Y$, Theorem~\ref{criterion} reduces to the characterization $D_M^*S(\bar x,\bar y)(0)=\{0\}$ of the Lipschitz-like property of $S$ with the formula $\operatorname{lip}S(\bar x,\bar y)=|D_M^*S(\bar x\vbl\bar y)|^+$ for the exact Lipschitzian bound expressed in terms of the basic coderivative from \cite{m80}; see \cite[Theorem~5.7]{m93} and also \cite[Theorem~9.40]{Rockafellar1998}, where this result is called the {\it Mordukhovich criterion}. In \cite{Yang2021}, this criterion is extended to constrained multifunctions between finite-dimensional spaces with convex sets $\Omega$ in terms of the projectional coderivative different from the contingent one. For multifunctions between infinite-dimensional spaces, an unconstrained version of Theorem~\ref{criterion} is given in \cite[Theorem~4.9]{Mordukhovich2006}. We also refer the reader to \cite{GO16} for some {\it directional counterparts} of the (unconstrained) Lipschitzian and related properties.\vspace*{0.03in}

Next we show that the relative Lipschitz-like property for constrained multifunctions is {\it equivalent} to the newly introduced {\it relative metric regularity} and {\it relative linear openness} properties of {\it inverse} mappings.
Prior to this, we present the following lemma whose proof is similar to \cite[Lemma~9.39]{Rockafellar1998} in $\mathbb{R}^n$ and thus is omitted.\vspace*{-0.03in}

\begin{lemma}\label{extended}
Let  $S:X\rightrightarrows Y$ be a set-valued mapping between Banach spaces, and let $\Omega\subset X$ and $\Theta \subset Y$ be nonempty sets. Given a pair $(\bar x,\bar y)\in\operatorname{gph}S|_\Omega^\Theta$, assume that $\operatorname{gph}S|_\Omega^\Theta$ is closed around this point. Then we have the equivalent assertions:

{\bf(i)} There exist $V\in\mathcal N(\bar x)$ and $W\in \mathcal N(\bar y)$ such that
$$
S|^\Theta(x')\cap W \subset S|^\Theta(x)+\kappa\|x-x'\|_XB_Y\;\mbox{ for all }\;x,x'\in\Omega\cap V.
$$

{\bf(ii)} There exist $V\in\mathcal N(\bar x)$ and $W\in \mathcal N(\bar y)$ such that
$$
S|^\Theta(x')\cap W\subset S|^\Theta (x)+\kappa\|x-x'\|_XB_Y\;\mbox{ for all } x\in\Omega\cap V\;\mbox{ and }\;x'\in \Omega.
$$
\end{lemma}\vspace*{-0.05in}

Now we are in a position to establish the aforementioned equivalences for the relative well-posedness properties (two of which are introduced in the theorem formulation) and provide {\it pointwise characterizations} of the new properties in terms of the relative mirror contingent coderivative from Definition~\ref{coderivatives}(iv).\vspace*{-0.03in}

\begin{theorem}\label{MRO} Let $S:X\rightrightarrows Y$ be a set-valued mapping between Banach spaces, and let $\Omega\subset X$ and $\Theta\subset Y$ be closed. Pick $(\bar x,\bar y)\in\gph S|_\Omega^\Theta$ and suppose that the set $\operatorname{gph}S|_\Omega^\Theta$ is closed around $(\bar x,\bar y)$. Then the following properties are equivalent:

{\bf(a)} The inverse $S^{-1}$ is Lipschitz-like relative to $\Theta\times\Omega$ around $(\bar y,\bar x)$.

{\bf(b)} The mapping $S$ is {\sc metrically regular relative to} $\Omega\times\Theta$ around $(\bar x,\bar y)$, i.e., there exist $V\in\mathcal{N}(\bar x)$, $W\in\mathcal{N}(\bar y)$, and $\kappa\ge 0$ such that
$$
d\big(x',S^{-1}|^\Omega(y)\big)\le\kappa d\big(y,S|^\Theta(x')\big)\;\mbox{ for all }\;x'\in \Omega\cap V,~y\in \Theta\cap W.
$$

{\bf(c)} The mapping $S$ is {\sc linearly open relative to} $\Omega\times \Theta$ around $(\bar x,\bar y)$, i.e., there exist $V\in\mathcal{N}(\bar x)$, $W\in\mathcal{N}(\bar y)$, and $\kappa\ge 0$ such that
$$
\big(S|^\Theta(x')+\varepsilon B_Y\big)\cap \Theta\cap W\subset S|_\Omega(x'+\kappa\varepsilon B_X)\;\mbox{ for all }\;x'\in \Omega\cap V,~\varepsilon>0.
$$

{\bf(d)} If $X$ and $Y$ are reflexive, $\Omega$ and $\Theta$ are convex, the {\sc relative mirror contingent coderivative criterion} holds: $S$ is mirror PSNC relative to $\Omega\times\Theta$ at $(\bar x,\bar y)$ and
\begin{equation*}
\ker D_m^*S_\Omega^\Theta(\bar x\vbl\bar y)=\{0\}.
\end{equation*}
\end{theorem}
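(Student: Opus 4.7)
The strategy is to establish the geometric equivalences (a)$\Leftrightarrow$(b)$\Leftrightarrow$(c) in arbitrary Banach spaces, and then obtain (a)$\Leftrightarrow$(d) in the reflexive setting by applying Theorem~\ref{criterion} to the inverse multifunction $F:=S^{-1}$.

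For (a)$\Rightarrow$(b), I would fix Lipschitz-like neighborhoods $\tilde V\in\mathcal N(\bar y)$, $\tilde W\in\mathcal N(\bar x)$ and constant $\kappa$ for $S^{-1}$ relative to $\Theta$, then shrink to $V\subset\tilde W$, $W\subset\tilde V$. For $x'\in\Omega\cap V$ and $y\in\Theta\cap W$, pick a minimizing sequence $y_n\in S|^\Theta(x')$ for $d(y,S|^\Theta(x'))$; eventually $y_n\in\tilde V$ and $x'\in S^{-1}(y_n)\cap\tilde W\subset S^{-1}(y)+\kappa\|y_n-y\|_Y B_X$, and passing to the limit gives $d(x',(S|^\Theta)^{-1}(y))\le\kappa\,d(y,S|^\Theta(x'))$. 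For (b)$\Rightarrow$(c), if $z\in(S|^\Theta(x')+\varepsilon B_Y)\cap\Theta\cap W$ then $d(z,S|^\Theta(x'))\le\varepsilon$, so (b) yields $d(x',(S|^\Theta)^{-1}(z))\le\kappa\varepsilon$; approximating this infimum produces $x''\in x'+(\kappa+\eta)\varepsilon B_X$ with $z\in S(x'')$ for any $\eta>0$, which is linear openness with any constant strictly larger than $\kappa$. For (c)$\Rightarrow$(a), take $y,y'\in\Theta\cap W$ and $x\in S^{-1}(y)\cap V$; applying (c) with $\varepsilon:=\|y'-y\|_Y$ (so $y'\in(S|^\Theta(x)+\varepsilon B_Y)\cap\Theta\cap W$) produces $x'\in x+\kappa\|y'-y\|_Y B_X$ with $y'\in S(x')$, hence $S^{-1}(y)\cap V\subset S^{-1}(y')+\kappa\|y-y'\|_Y B_X$. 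Lemma~\ref{extended} then upgrades this asymmetric inclusion to the symmetric relative Lipschitz-like property of Definition~\ref{lip-like}.

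For (a)$\Leftrightarrow$(d), I apply Theorem~\ref{criterion} to $F:=S^{-1}\colon Y\rightrightarrows X$ with $\Theta\subset Y$ playing the role of the constraint set. The hypotheses are satisfied: $\Theta$ is closed and convex, and $\gph F|_\Theta$ is closed around $(\bar y,\bar x)$ since $\gph S$ is closed around $(\bar x,\bar y)$. Theorem~\ref{criterion} then characterizes (a) by: $F$ is PSNC relative to $\Theta$ at $(\bar y,\bar x)$ and $D_M^*F_\Theta(\bar y|\bar x)(0)=\{0\}$. The PSNC condition matches the PSNC clause of (d) directly through the isometric coordinate swap $(y,x)\leftrightarrow(x,y)$ between $\gph F|_\Theta$ and $\gph S|^\Theta$. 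For the coderivative clause, the modes of convergence in $D_M^*F_\Theta$ (strong on the $X^*$-input and weak$^*$ on the $Y^*$-output of the coderivative) align verbatim with those defining $D_m^*S^\Theta$, and under the swap the $\varepsilon$-normal condition $(y_k^*,-x_k^*)\in\widehat N_{\varepsilon_k}((y_k,x_k);\gph F|_\Theta)\cap[J_Y(T(y_k;\Theta))\times X^*]$ becomes the $\varepsilon$-normal condition defining $D_m^*S^\Theta(\bar x|\bar y)$, modulo a harmless sign relabeling extracted on subsequences. Thus $D_M^*F_\Theta(\bar y|\bar x)(0)=\{0\}$ is equivalent to $\ker D_m^*S^\Theta(\bar x|\bar y)=\{0\}$, closing the equivalence with (a).

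The main technical obstacle will be the bookkeeping in Step~2 after the coordinate swap: one must verify that the $\varepsilon$-normals to $\gph F|_\Theta$ correspond, through the swap, to the normals appearing in $D_m^*S^\Theta$, and reconcile the fact that the former arise from the restricted graph $\gph S|^\Theta=\gph S\cap(X\times\Theta)$ while the latter are taken with respect to $\gph S$ itself together with the duality-image constraint $J_Y(T(\bar y;\Theta))$. The convexity of $\Theta$ is what makes this reconciliation possible---through Proposition~\ref{JT} for stability of the tangential constraint along converging sequences and, where needed, through the fuzzy intersection rule in Proposition~\ref{fuzzy}(i) to decompose normals of the intersection $\gph S\cap(X\times\Theta)$ into a normal to $\gph S$ and a contribution in $\{0\}\times\widehat N(\cdot;\Theta)$ absorbed by the $J_Y(T(\cdot;\Theta))$ restriction. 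The equivalences in Step~1 are essentially classical, with Lemma~\ref{extended} absorbing the domain-asymmetry inherent in the relative setting.
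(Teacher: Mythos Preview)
Your overall plan coincides with the paper's: prove the cycle (a)$\Rightarrow$(b)$\Rightarrow$(c)$\Rightarrow$(a) by elementary manipulations, and get (a)$\Leftrightarrow$(d) by applying Theorem~\ref{criterion} to $F:=S^{-1}$ with constraint set $\Theta$. Two points deserve correction.

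\textbf{In (a)$\Rightarrow$(b) you need Lemma~\ref{extended}, not a minimizing sequence.} Your claim ``eventually $y_n\in\tilde V$'' is unjustified: $\|y_n-y\|_Y\to d(y,S|^\Theta(x'))$ says nothing about $y_n$ being near $\bar y$ unless that distance happens to be small, which is not given. The paper avoids this by invoking Lemma~\ref{extended} \emph{at the start}, applied to $S^{-1}$: the relative Lipschitz-like property upgrades to
\[
S^{-1}(y')\cap V\subset S^{-1}(y)+\kappa\|y-y'\|_Y B_X\quad\text{for all }y\in\Theta\cap W,\ y'\in\Theta,
\]
with $y'$ now unrestricted in $\Theta$. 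Then for $x'\in\Omega\cap V$ one takes \emph{any} $y'\in S|^\Theta(x')$ directly (no neighborhood condition on $y'$), gets $d(x',S^{-1}(y))\le\kappa\|y-y'\|_Y$, and minimizes over $y'$. You already use Lemma~\ref{extended} in (c)$\Rightarrow$(a); the paper uses it here as well.

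\textbf{In (a)$\Leftrightarrow$(d) there is no obstacle to overcome.} The paper's proof is a one-line coordinate swap: unwinding the definition, $x^*\in D_m^*S^\Theta(\bar x|\bar y)(y^*)$ is equivalent to $-y^*\in D_M^*(S^{-1})_\Theta(\bar y|\bar x)(-x^*)$, because $\operatorname{gph}S|^\Theta$ and $\operatorname{gph}(S^{-1})|_\Theta$ are exchanged by the swap $(x,y)\leftrightarrow(y,x)$ and the strong/weak$^*$ convergence modes of the mirror and mixed coderivatives interchange accordingly. Hence $\ker D_m^*S^\Theta(\bar x|\bar y)=\{0\}\Leftrightarrow D_M^*(S^{-1})_\Theta(\bar y|\bar x)(0)=\{0\}$, and Theorem~\ref{criterion} closes the loop. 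The paper does \emph{not} invoke Proposition~\ref{JT} or the fuzzy intersection rule here; the $\varepsilon$-normals on both sides are taken with respect to the \emph{same} restricted graph (up to the swap), so there is nothing to reconcile.
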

\begin{proof}
${\bf[(a)\Longrightarrow(b)]}$: Apply Lemma~\ref{extended} to $S^{-1}$ and interpret the property in (a) as referring to the existence of $V\in\mathcal{N}(\bar x)$, $W\in\mathcal{N}(\bar y)$, and $\kappa\geq0$ such that
$$
S^{-1}|^\Omega(y')\cap V\subset S^{-1}|^\Omega(y)+\kappa\|y-y'\|_YB_X\;\mbox{ for all }\;y\in \Theta\cap W,~y'\in \Theta.
$$
Picking any $x'\in \Omega\cap V$ and $y\in\Theta\cap W$, it is obvious that $S^{-1}|^\Omega(y)\not=\emptyset$. It is trivial when $S|^\Theta(x')=\emptyset$. We assume that $S|^\Theta(x')\not=\emptyset$, and hence
$$
x'\in S^{-1}|^\Omega(y')\cap V\subset S^{-1}|^\Omega(y)+\kappa\|y-y'\|_YB_X\;\mbox{ for all }\;y'\in S|^\Theta(x'),
$$
which can be rewritten as the estimate
$$
d\big(x',S^{-1}|^\Omega(y)\big)\leq\kappa\|y-y'\|_Y\;\mbox{ whenever }\; y\in \Theta\cap W,~y'\in S|^\Theta(x')
$$
implying (b) by the minimization of the right-hand side quantity over $y'\in S|^\Theta(x')$.

${\bf[(b)\Longrightarrow(c)]:}$~ Fix $x'\in \Omega\cap V$, $\varepsilon>0$ and assume without loss of generality that $\big(S|^\Theta(x')+\varepsilon B_Y\big)\cap \Theta\cap W\neq\emptyset$. For any $y\in\big(S|^\Theta(x')+\varepsilon B_Y\big)\cap \Theta\cap W$, we have $y\in \Theta \cap W$ and $d(y,S|^\Theta(x'))\leq\varepsilon$. Then it follows from (b) that $d(x',S^{-1}|^\Omega(y))\leq\kappa\varepsilon$, which implies in turn that $y\in S|_\Omega(x'+\kappa'\varepsilon B_X)$ for any $\kappa'>\kappa$ and thus verifies (c).

${\bf[(c)\Longrightarrow(a)]}$:~Fix $y\in \Theta\cap W$ and $y'\in \Theta$. We may assume that $S^{-1}|^\Omega(y')\cap V\ne\emptyset$. Then $x'\in \Omega\cap V$ for any $x'\in S^{-1}|^\Omega(y')\cap V$. Taking $\varepsilon:=\|y-y'\|_Y$ and using $\bf(c)$ together with $y\in \Theta\cap W$ tell us that
\begin{eqnarray*}
&& y\in \big(y' +\varepsilon B_Y\big)\cap \Theta\cap W \subset \big(S(x') \cap \Theta + \varepsilon B_Y \big)\cap \Theta\cap W \\
&& \hskip0.25cm = \big(S|^\Theta(x')+\varepsilon B_Y\big)\cap \Theta\cap W\subset S|_\Omega(x'+\kappa\varepsilon B_X),
\end{eqnarray*}
which yields the relationships
$$
x'\in S^{-1}|^\Omega(y)+\kappa\varepsilon B_X=S^{-1}|^\Omega(y)+\kappa\|y-y'\|_YB_X
$$
and therefore shows, by employing Lemma \ref{extended} to $S^{-1}$, that assertion (a) is satisfied.

{\bf[(a)$\Longleftrightarrow$(d)]}:~We deduce from Lemma \ref{mirror}(ii) that
$$
D_M^*S_\Theta^{-1}(\bar y\vbl\bar x)(0)=\{0\}\Longleftrightarrow\ker D_m^*S^\Theta(\bar x\vbl\bar y)=0.
$$
Using finally Theorem \ref{criterion} and Lemma \ref{mirror}(i) verifies the equivalence between (a) and (d) and thus completes the proof of all the claimed assertions.
\end{proof}

Next we present a consequence of Theorem~\ref{MRO}  while combining it with the classical open mapping theorem in general Banach spaces.\vspace*{-0.03in}

\begin{corollary}\label{linear}
Let $A\in\mathcal{L}(X,Y)$, where both spaces $X$ and $Y$ are Banach. Suppose that the subspace $Y_1:=AX$ is closed in $Y$ and that $(\bar y,\bar x)\in\operatorname{gph} A^{-1}|_{Y_1}$. Then the mapping $A^{-1}$ is Lipschitz-like  relative to $Y_1 \times X$ around $(\bar y,\bar x)$.
\end{corollary}
\begin{proof}\vspace*{-0.02in}
Since $Y_1=AX$ is closed, it is a Banach space. Then the linear mapping $A:X\to Y_1$ is surjective. The classical open mapping theorem tells us that there exists $\kappa>0$ such that $B_{Y_1}\subset\kappa A(B_X)$. Thus it holds that
$$
Ax+\varepsilon B_{Y_1}\subset Ax+\kappa\varepsilon A(B_X)=A(x+\kappa\varepsilon B_X)\;\mbox{ whenever }\;x\in X~\operatorname{and}~\varepsilon>0,
$$
which implies that for all $x\in X$ and $\varepsilon>0$ we have the inclusion
$$
(Ax+\varepsilon B_Y)\cap Y_1\subset A(x+\kappa\varepsilon B_X)
$$
due to $AX=Y_1$. Therefore, $A$ is linearly open relative to $X \times Y_1$ around $(\bar x,\bar y)$. Using the equivalence between (a) and (c) in Theorem~\ref{MRO} verifies that $A^{-1}$ has the Lipschitz-like property relative to $Y_1 \times X$ around $(\bar y,\bar x)$, which is claimed.
\end{proof}

Yet another issue of its own interest happens to be of high importance for our analysis in infinite-dimensional spaces. This addresses the interplay between the weak and strong convergence of sequences. The classical {\it Kadec-Klee property} says that
\begin{equation*}
\big[x_k\rightharpoonup x,~ \|x_k\|_X\to \|x\|_X\big]\Longleftrightarrow x_k\to x~~\text{as}~k\to\infty.
\end{equation*}
As well known (see, e.g., \cite[Proposition~1.4, Chapter II]{deville}), every locally uniformly convex Banach space with a strictly convex norm enjoys the Kadec-Klee property. \vspace*{0.03in}

By applying Corollary \ref{linear}, we now derive an important convergence result of this type valid in {\it any Banach space}.  The authors are grateful to one reviewer for providing a proof of this result by constructing an isomorphism between $X\diagup_{\mathrm{ker}A}:=\{\hat{x}:=x+\mathrm{ker}A\ |\;x\in X\}$ and $AX$ (assumed to be closed) and applying the assumption that $\mathrm{ker}A$ is finite-dimensional. Nevertheless, to demonstrate the application of the relative Lipschitz-like property, we prefer to keep our (rather simpler) proof as follows.\vspace*{-0.02in}

\begin{corollary}\label{finiteker}
Let $X$ and $Y$ be Banach spaces, and let $A\in \mathcal{L}(X,Y)$. Suppose that $AX$ is closed in $Y$ and  that $\operatorname{dim}(\operatorname{ker} A)<\infty$. Then for any $x\in X$, we have
\begin{equation}\label{finiteker0}
\big[x_k\rightharpoonup x,~~A x_k\to A x\big]\Longrightarrow x_k\to x\;\mbox{ as }\; k\to\infty.
\end{equation}
\end{corollary}
\begin{proof}
Pick $x\in X$ and a sequence $x_k\rightharpoonup x$ such that { $Ax_k\to Ax$}. Denote $Y_1:=AX$ and deduce from Corollary~\ref{linear} that $A^{-1}$ has the Lipschitz-like property relative to $Y_1 \times X$ around $(A x,x)$ with a constant $\kappa\geq0$. For any $\varepsilon>0$, define
\begin{equation}\label{finiteker2}
\hat x_k:=x+\varepsilon(x_k-x)~~~\operatorname{and}~~~\hat y_k:=A \hat x_k.
\end{equation}
From the boundedness of $\{x_k\}$ and the relative Lipschitz-like property of $A^{-1}$, it follows for all $\varepsilon>0$ sufficiently small that
$$
\hat x_k\in A^{-1}(A x)+\kappa\|\hat y_k-A x\|_Y B_X=x+\operatorname{ker} A+\kappa\|\hat y_k-A x\|_YB_X.
$$
This gives us $\hat x_{1k}\in\operatorname{ker}A$ and $\hat x_{2k}\in x+\kappa\|\hat y_k-A x\|_YB_X$ such that
\begin{equation}\label{finiteker3}
\hat x_k=\hat x_{1k}+\hat x_{2k},\quad k\in\mathbb N.
\end{equation}
Using \eqref{finiteker2} and the assumed convergence on the left-hand side of \eqref{finiteker0} yields
$\hat x_k\rightharpoonup x$ and $\hat x_{2k}\to x$. Then we deduce from
\eqref{finiteker3} that $\hat x_{1k}\rightharpoonup 0$. Moreover, $\hat x_{1k}\in\operatorname{ker} A$ and $\operatorname{dim}(\operatorname{ker} A)<\infty$ ensure the norm convergence $\hat x_{1k}\to 0$. Combining the latter with \eqref{finiteker2} and \eqref{finiteker3} yields $x_k\to x$ as $k\to\infty$ and thus completes the proof.
\end{proof}

To study the relative Lipschitz-like property of multifunctions, the notion of {\it projectional coderivative} was introduced in \cite{Yang2021} in finite-dimensional spaces. Its {\it infinite-dimensional} version can be defined as follows. Let $S:X\rightrightarrows Y$ be a multifunction between two Banach spaces, and let $\Omega$ be a subset of $X$. Fixing $(\bar x,\bar y)\in\operatorname{gph} S|_\Omega$, we say that $x^*\in D_\Omega^{\mbox{\footnotesize proj}*}S(\bar x\vbl\bar y)(y^*)$ if there exist sequences $(x_k,y_k)\stackrel{\operatorname{gph} S|_\Omega}{\longrightarrow}(\bar x,\bar y)$  as $k\to\infty$ and $
(x_k^*,-y_k^*)\in N{ \big(}(x_k,y_k);\operatorname{gph} S|_\Omega{ \big)}$ for all $k$ such that ${ \mbox{proj}_{J_X(T(x_k;\Omega))}(x_k^*)}\rightharpoonup^* x^*$ and $y_k^*\to y^*$ as $k\to\infty$.  The following example demonstrates that this version of the projectional coderivative {\it does not} allow us to characterize the relative Lipschitz-like property in infinite-dimensional spaces as $D_\Omega^{\mbox{\footnotesize proj}*}S(\bar x\vbl\bar y)(0) \not= \{0\}$.

{ 
\begin{example} {\rm Let $X=L^{4/3}(0,1)$. It is easy to verify that
$$
J_{L^{4/3}(0,1)}(x(t))=\left\{
\begin{aligned}
&\displaystyle\|x\|_X^{2/3}|x(t)|^{-2/3}x(t)~~~&\mbox{if }~x(t)\not=0,\\
&0~~~&\mbox{if }~x(t)=0
\end{aligned}
\right.
$$
for all $x\in X$. Consider $S$ as $id_X$ and define the unit element $e \in X$ by $e(t):=1,\; t \in (0,1)$. Let $\Omega:=\{\mu e~|~\mu\geq0\}$ and $\Theta:=X$. It is obvious that $S$ is Lipschitz-like relative to $\Omega \times X$ around $(0,0)$. Let us now show that $e \in D_\Omega^{\mbox{\footnotesize proj}*}S(0|0)(0)$, which demonstrates that the projectional coderivative does not allow us to characterize the relative Lipschitz-like property. To proceed, take any $x_k\in\Omega\setminus\{0\} \to 0$ and $y_k = x_k$. We have that $T(x_k;\Omega)= \{\mu e~|~\mu\in\mathbb R\}$ and observe that $J_{L^{4/3}(0,1)}(e)=e$, which tells us  that 
$J_{L^{4/3}(0,1)}\big(T(x_k;\Omega)\big)=\{\mu e~|~\mu\in\mathbb R\}$. Let $\bar\tau\in\mathbb R$ be a unique (positive) root of the equation $2\tau^3-6\tau^2-1=0$. Define
$$
x_k^*(t):=\left\{
\begin{aligned}
&2\bar\tau~~~&\mbox{if}~t\in\left(0,\frac{1}{3}\right],\\
&-\bar\tau~~~&\mbox{if}~t\in\left(\frac{1}{3},1\right),
\end{aligned}
\right.
~~~\mbox{and}~~~y_k^*(t)\equiv 0,\quad k\in\mathbb N.
$$
Taking into account that the set $\gph S|_\Omega=\{(x,x)~|~x\in\Omega\}$ is convex and that
$$
\langle (x_k^*,y_k^*),(x',x')-(x,x)\rangle=\int_0^1x_k^*(t)(x'(t)-x(t))dt=0\;\mbox{ for all }\;x'\in\Omega,
$$
we deduce that $(x_k^*,-y_k^*)\in N((x_k,y_k);\gph S|_\Omega))$. It is easy to see that $\bar \mu=1$ solves the unconstrained optimization problem
$$
\min_{\mu\in\mathbb R} f_k(\mu):= \|x_k^*-\mu e\|_{X^*}^4=\int_0^1(x_k^*(t)-\mu)^4dt\;\mbox{ for each }\;k\in\mathbb N.
$$
Thus $e$ is the projection of $x_k^*$ to $J_{L^{4/3}(0,1)}(T(x_k;\Omega))$, and so $e\in D_\Omega^{\mbox{\footnotesize proj}*}S(0|0)(0)$.}
\end{example}}\vspace*{-0.05in}

\section{Variational Calculus}\label{sec:calculus} To be useful in applications, any construction of variational analysis must satisfy adequate {\it calculus rules}. This section addresses deriving the fundamental {\it chain rules} and {\it sum rules} for newly introduced mixed and normal relative contingent coderivatives as well as for the relative PSNC property. These calculus rules facilitate applications of the relative contingent coderivatives in verifying stability properties of some structured multifunctions. On the other hand, the established characterizations of the relative well-posedness and PSNC properties allow us to {\it efficiently verify} the calculus rules below for large classes of multifunctions.\vspace*{0.03in}

To proceed further, we introduce the following notions of inner semicontinuity and inner semicompactness of a multifunction relative to a set. These notions are extensions of the ones from \cite{Mordukhovich2006} to the case of constrained multifunctions.\vspace*{-0.03in}

\begin{definition}\label{inner-semi}
Let $S:X\rightrightarrows Y$ be a multifunction between Banach spaces, $\Omega\subset X$ be a nonempty set, and $(\bar x,\bar y) \in \operatorname{gph}S|_\Omega$. Then we say that:
\begin{description}
\item[(i)] $S$ is {\sc inner semicontinuous relative to} $\Omega$ at $(\bar x,\bar y)$, if for every sequence $x_k\stackrel{\operatorname{dom}S\cap \Omega}{\longrightarrow}\bar x$, there are $y_k\in S(x_k)$ 
converging to $\bar y$.
\item[(ii)] $S$ is {\sc inner semicompact relative to} $\Omega$ at $\bar x$, if for every sequence $x_k\stackrel{\operatorname{dom} S\cap\Omega}{\longrightarrow}\bar x$, there is a sequence   $y_k\in S(x_k)$ that contains a convergent   subsequence. \color{black}
\end{description}
\end{definition}\vspace*{0.03in}

Let us start with deriving {\it chain rules} for the  {\it relative contingent coderivatives} from Definition~\ref{coderivatives}(ii,iii). For brevity, we restrict ourselves to the case of {\it inner semicontinuity} of the auxiliary mapping below with discussing the case of {\it inner semicompactness} after the proof of the theorem, where $X$ and $\Omega$ are used as $X \times Z$ and $\Omega \times \Theta$ repsectively. For the notation simplicity, $D_M^*S_2^\Theta(\bar y\vbl\bar z)$ and $D_m^*S_{1\Omega}(\bar x\vbl\bar y)$ are the short forms of $D_M^*(S_2)^\Theta(\bar y\vbl\bar z)$ and $D_m^*(S_1)_\Omega(\bar x\vbl\bar y)$, respectively; the same are applied when subscripts $M$ and $m$ are replaced by $N$. \vspace*{-0.03in}

\begin{theorem}\label{chain-rule}
Let $S_1:X\rightrightarrows Y$ and $S_2:Y\rightrightarrows Z$ be multifunctions between reflexive Banach spaces, and let $\Omega\subset X$ and $\Theta\subset Z$ be closed and convex. Consider the composition $S:=S_2\circ S_1\colon X\rightrightarrows Z$ and define the multifunction $G\colon X\times Z\rightrightarrows Y$ by
\begin{equation}\label{G-semi}
G(x,z):=S_1(x)\cap S_2^{-1}(z)=\big\{y\in S_1(x)~\big|~z\in S_2(y)\big\}\;\mbox{ for }\;x\in X,\;z \in Z.
\end{equation}
Fixing $(\bar x,\bar z)\in\operatorname{gph}S|_\Omega^\Theta$ and $\bar y\in G(\bar x,\bar z)$, suppose that $\operatorname{gph}S_1|_\Omega$ and $\operatorname{gph}S_2|^\Theta$ are closed around $(\bar x,\bar y)$ and
$(\bar y,\bar z)$, respectively, and that $G$ is inner semicontinuous relative to $\Omega\times \Theta$ at $(\bar x,\bar z,\bar y)$. The following constrained coderivative chain rules hold:

{\bf(i)} Assume that either $S_1$ is mirror PSNC relative to $\Omega\times Y$ at $(\bar x,\bar y)$ or $S_2$ is PSNC relative to $Y\times\Theta$ at $(\bar y,\bar z)$, and that the mixed qualification condition
\begin{equation}\label{mixedQC1}
\ker D_m^*S_{1\Omega}(\bar x\vbl\bar y)\cap D_M^*S_2^\Theta(\bar y\vbl\bar z)(0)=\{0\}
\end{equation}
is satisfied. Then for all $z^*\in Z^*$, we have the inclusions
\begin{eqnarray}
D_M^*S_\Omega^\Theta(\bar x\vbl\bar z)(z^*)&\subset& D_N^* S_{1\Omega} (\bar x\vbl\bar y)\circ D_M^* S_2^\Theta(\bar y\vbl\bar z)(z^*),\label{chain-rule1}\\
D_N^*S_\Omega^\Theta(\bar x\vbl\bar z)(z^*)&\subset& D_N^* S_{1\Omega} (\bar x\vbl\bar y)\circ D_N^* S_2^\Theta(\bar y\vbl\bar z)(z^*)\label{chain-rule2}
\end{eqnarray}
while noting that the subscript $N$ in {\rm(\ref{chain-rule1})} is not a typo.

{\bf(ii)} If $\Theta=Z$ and $S_2:=f\colon Y\to Z$ is single-valued and strictly differentiable at $\bar y$, then we have the enhanced inclusion for the relative mixed contingent coderivative:
\begin{equation}\label{chain-rule3}
D_M^*S_\Omega(\bar x\vbl\bar z)(z^*)\subset D_M^* S_{1\Omega} (\bar x\vbl\bar y)\big(\nabla f(\bar y)^*z^*\big) \;\mbox{ whenever }\;z^*\in Z^*.
\end{equation}
\end{theorem}

\begin{proof}
To justify assertion (i), we just verify \eqref{chain-rule1} while observing that the proof of \eqref{chain-rule2} is similar and can be omitted. To proceed with \eqref{chain-rule1}, pick any element $x^*\in D_M^*S_\Omega^\Theta(\bar x\vbl\bar z)(z^*)$ and find sequences
$\varepsilon_k\downarrow0$, $(x_k,z_k)\stackrel{\operatorname{gph} S|_\Omega^\Theta}{\longrightarrow}(\bar x,\bar z)$, and
\begin{equation}\label{chain-rule4}
(x_k^*,-z_k^*)\in\widehat N_{\varepsilon_k}\big((x_k,z_k);\operatorname{gph} S|_\Omega^\Theta\big)\cap[J_X\big(T(x_k;\Omega)\big)\times J_Z\big(T(z_k;\Theta)\big)], \quad k\in\mathbb N,
\end{equation}
along which we have the convergences $z_k^*\to z^*$, $x_k^*\rightharpoonup^*x^*$ as $k\to\infty$.
Since $G$ is inner semicontinuous relative to $\Omega\times \Theta$ at $(\bar x,\bar z,\bar y)$, there are $y_k\in G(x_k,z_k)$ with $y_k\to \bar y$. Moreover, it follows from Lemma~\ref{JT} and \eqref{chain-rule4} that there exist $\delta_k>0$ such that
\begin{equation}\label{chain-rule5}
\begin{aligned}
x_k^*&\in J_X\big(T(x;\Omega)\big)+\varepsilon_k B_{X^*}\;\mbox{ for all }\;x\in \Omega\cap(x_k+\delta_k B_X)\;\mbox{ and }\;k\in\mathbb N,\\
- z_k^*&\in J_Z\big(T(z;\Theta)\big)+\varepsilon_k B_{Z^*}\;\mbox{ for all }\;z\in \Omega\cap(z_k+\delta_k B_Z)\;\mbox{ and }\;k\in\mathbb N.
\end{aligned}
\end{equation}
Let $C:=\{(x,y,z)\in X\times Y\times Z|y\in S_1|_\Omega(x),z\in S_2|^\Theta(y)\}$. It follows from \eqref{chain-rule4} that
$$
\begin{aligned}
&\limsup\limits_{(x,y,z)\stackrel{C}{\longrightarrow}(x_k,y_k,z_k)}
\frac{\langle(x_k^*,0,-z_k^*),(x,y,z)-(x_k,y_k,z_k)\rangle}{\|x-x_k\|_X+\|y-y_k\|_Y+\|z-z_k\|_Z}\\
\leq&\limsup\limits_{(x,z)\stackrel{\operatorname{gph} S|_\Omega^\Theta}{\longrightarrow}(x_k,z_k)}
\frac{\langle(x_k^*,-z_k^*),(x,z)-(x_k,z_k)\rangle^+}{\|x-x_k\|_X+\|z-z_k\|_Z}\le~\varepsilon_k,
\end{aligned}
$$
which readily verifies the inclusions
\begin{equation}\label{chain-rule6}
(x_k^*,0,-z_k^*)\in\widehat N_{\varepsilon_k}\big((x_k,y_k,z_k);C\big)\;\mbox{ for all }\; k\in\mathbb N.
\end{equation}
Defining $C_1:=\gph S_1|_\Omega\times Z$ and $C_2:=X\times\gph S_2|^\Theta$ gives us $C=C_1\cap C_2$. Let $\xi_k:=\min\{\delta_k,\varepsilon_k\}$, $w_k:=(x_k,y_k,z_k)$, and $w_k^*:=(x_k^*,0,-z_k^*)$. Endow the space { $W:=X\times Y\times Z$}  with the norm $\|(x,y,z)\|_W:=\sqrt{\|x\|_X^2+\|y\|_Y^2+\|z\|_Z^2}$. It follows from \eqref{chain-rule6} and Proposition~\ref{fuzzy} that there exist sequences
\begin{eqnarray}
&&\hat w_k\in C_1\cap(w_k+\xi_k B_W),~~\tilde w_k\in C_2\cap(w_k+\xi_k B_W),\label{chain-rule7}\\
&&\lambda_k\geq0,~~\hat w_k^*\in\widehat N(\hat w_k;C_1),~~
\tilde w_k^*\in\widehat N(\tilde w_k;C_2) \nonumber
\end{eqnarray}
such that for each $k\in\mathbb N$ we have the inequalities
\begin{equation}\label{chain-rule9}
\|\lambda_k w_k^*-\hat w_k^*-\tilde w_k^*\|_{W^*}\leq4\varepsilon_k,\quad
1-2\varepsilon_k\leq\max\big\{\lambda_k,\|\hat w_k^*\|_{W^*}\big\}\leq1+2\varepsilon_k.
\end{equation}
Denote $\hat w_k:=(\hat x_k,\hat y_k,\hat z_k)$, $\tilde w_k:=(\tilde x_k,\tilde y_k,\tilde z_k)$ and conclude from \eqref{chain-rule7} that
\begin{equation}\label{chain-rule10}
(\hat x_k,\hat y_k)\stackrel{\operatorname{gph} S_1|_\Omega}{\longrightarrow}(\bar x,\bar y)~~~\mbox{and}~~~(\tilde y_k,\tilde z_k)\stackrel{\operatorname{gph} S_2|^\Theta}{\longrightarrow}(\bar y,\bar z).
\end{equation}
Moreover, the structures of $C_1$ and $C_2$ lead us to the representations
\begin{equation}\label{chain-rule11}
\hat w_k^*=(\hat x_k^*,-\hat y_k^*,0)~~~\mbox{and}~~~\tilde w_k^*=(0,\tilde y_k^*,-\tilde z_k^*)
\end{equation}
together with the inclusions
\begin{equation}\label{chain-rule12}
(\hat x_k^*,-\hat y_k^*)\in\widehat N\big((\hat x_k,\hat y_k);\gph S_1|_\Omega\big)~~~\mbox{and}~~~(\tilde y_k^*,-\tilde z_k^*)\in\widehat N\big((\tilde y_k,\tilde z_k);\gph S_2|^\Theta\big).
\end{equation}
We now claim that there exists $\lambda_0>0$ such that $\lambda_k\geq\lambda_0$ for all $k\in\mathbb N$. Indeed, supposing the contrary gives us without loss of generality that $\lambda_k\downarrow0$ as $k\to\infty$. Then it follows from \eqref{chain-rule9},  \eqref{chain-rule11}, { and the boundedness of
$\{w^*_k\}$} that
\begin{equation}\label{chain-rule13}
\hat x_k^*\to0,~~\hat y_k^*-\tilde y_k^*\to0,~~\mbox{and}~~\tilde z_k^*\to 0\;\mbox{ as }\;k\to\infty.
\end{equation}
Moreover, \eqref{chain-rule9} tells us that $\{\hat y_k^*\}$ is bounded, and so there exists $y^*\in Y^*$ with
\begin{equation}\label{chain-rule14}
\|\hat y_k^*\|_{Y^*}\to1~~~\mbox{and}~~~\hat y_k^*\rightharpoonup^* y^*\;\mbox{ as }\;k\to\infty.
\end{equation}
It clearly follows from \eqref{chain-rule12} that
$$
\begin{aligned}
&(0,-\hat y_k^*)\in\widehat N_{\|\hat x_k^*\|_{X^*}}\big((\hat x_k,\hat y_k);\gph S_1|_\Omega\big)\cap\big[{ J_X\big(T(\hat x_k;\Omega)\big)}\times Y^*\big]\\
&(\tilde y_k^*,0)\in\widehat N_{\|\tilde z_k^*\|_{Z^*}}\big((\tilde y_k,\tilde z_k);\gph S_2|^\Theta\big)\cap\big[Y^*\times J_Z\big(T(\hat z_k;\Theta)\big)\big].
\end{aligned}
$$
Combining the latter with \eqref{chain-rule10}, \eqref{chain-rule13} and \eqref{chain-rule14} yields
$0\in D_m^*S_{1\Omega}(\bar x\vbl\bar y)(y^*)$ and $y^*\in D_M^*S_2^\Theta(\bar y\vbl\bar z)(0)$,

and hence $y^*\in \ker D_m^*S_{1\Omega}(\bar x\vbl\bar y)\cap D_M^*S_2^\Theta(\bar y\vbl\bar z)(0)$. Then it follows from \eqref{mixedQC1} that $y^*=0$. The assumptions that either $S_2$ is PSNC relative to $Y\times\Theta$ at $(\bar y,\bar z)$, or $S_1$ is mirror PSNC relative to $\Omega\times Y$ at $(\bar x,\bar y)$ ensure that $\hat y_k^*\to 0$, which contradicts \eqref{chain-rule14} and thus justifies the claimed existence of $\lambda_0>0$.

Therefore, we assume without loss of generality that $\lambda_k=1$ for all $k\in\mathbb N$ and then deduce from \eqref{chain-rule9} that
\begin{equation*}\label{chain-rule15}
\hat y_k^*-\tilde y_k^*\to0,~~~\|\tilde z_k^*-z_k^*\|\leq4\varepsilon_k,~~~\mbox{and}~~~\|x_k^*-\hat x_k^*\|_{X^*}\leq4\varepsilon_k~~\mbox{for~all}~k\in\mathbb N.
\end{equation*}
Using \eqref{chain-rule9} again, we get that $\{\hat y_k^*\}$ is bounded and there exists $y^*\in Y^*$ such that
\begin{equation}\label{chain-rule16}
\hat y_k^*\rightharpoonup^* y^*~~~\mbox{and}~~~\tilde y_k^*\rightharpoonup^* y^*\;\mbox{ as }\;k\to\infty.
\end{equation}
Furthermore, it follows from \eqref{chain-rule5} that there exist $\bar x_k^*\in J_X\big(T(\hat x_k;\Omega)\big)$ with $\|\bar x_k^*-x_k^*\|_{X^*}\leq\varepsilon_k$ and $\bar z_k^*\in J_Z\big(T(\tilde z_k;\Theta)\big)$ with $\|\bar z_k^*-z_k^*\|_{Z^*}\leq\varepsilon_k$ for each $k\in\mathbb N$. Hence $\bar x_k^*\rightharpoonup^*x^*$, $\bar z_k^*\to z^*$ as $k\to\infty$, and
\begin{eqnarray}
&&(\bar x_k^*,-\hat y_k^*)\in\widehat N_{5\varepsilon_k}\big((\hat x_k,\hat y_k);\gph S_1|_\Omega\big)\cap\big[J_X\big(T(\hat x_k;\Omega)\big)\times Y^*\big],\label{chain-rule17}\\
&&(\tilde y_k^*,-\bar z_k^*)\in\widehat N_{5\varepsilon_k}\big((\tilde y_k,\tilde z_k);\gph S_2|^\Theta\big)\cap\big[Y^*\times J_Z\big(T(\tilde z_k;\Theta)\big)\big],\quad k\in\mathbb N. \nonumber
\end{eqnarray}
The above inclusions, together with \eqref{chain-rule10} and \eqref{chain-rule16}, yield $x^*\in D_N^*S_\Omega(\bar x\vbl\bar y)(y^*)$ and $y^*\in D_M^*S_2^\Theta(\bar x\vbl\bar y)(z^*)$ and thus justify  \eqref{chain-rule1}.

Let us verify (ii). Observe that the strict differentiability of $S_2=f$ ensures the fulfillment of the qualification condition \eqref{mixedQC1} and the relative PSNC property of $S_2$. 
Thus the proof of (i) still holds when replacing $S_2$ by $f$. By \eqref{chain-rule12} and \cite[Theorem 1.38]{Mordukhovich2006}, we get that $\tilde y_k^*=\nabla f(\bar x)^*\tilde z_k^*.$ Using $\|\tilde z_k^*- z_k^* \|_{Z^*} \leq 4 \varepsilon_k$ and $z^*_k \to z^*$ yields $\tilde z_k^*\to z^*$ and hence $\tilde y_k^*\to \nabla f(\bar x)^* z^*$ as $k\to\infty$. Together with $\tilde y_k^*\rightharpoonup^* y^*$ this gives us $\nabla f(\bar x)^* z^*= y^*$, which being combined with \eqref{chain-rule13} gives us $\hat y_k^*\to y^*$. Passing to the limit in \eqref{chain-rule17}, we arrive at  $x^*\in D_M^*(S_1)_\Omega(\bar x\vbl\bar y)(y^*)$ and hence justify \eqref{chain-rule3}. 
\end{proof}

In Theorem \ref{chain-rule}(ii), inclusion \eqref{chain-rule3} may not hold if $\Theta\ne Z$ as shown below.

\begin{example}
{\rm Let $X=\mathbb R, Y=Z=\ell^2$, $\Omega:=[-\frac{1}{2},\frac{1}{2}]$ and $\Theta:=\{z\in\ell^2~|~z_i=0,~i\geq2\}$. Define $f(y):=y$ for all $y\in Y$ and define the mapping $S_1$ by
$$
S_1(x):=\left\{
\begin{aligned}
&2^{-k}\mathbf e_k~~&&\mbox{if}~|x|=2^{-k}\;\mbox{for some }\;k \in \mathbb N\\
&2^{-1}\mathbf e_1~~&&\mbox{if}~|x|>2^{-1},\\
&0~~&&\mbox{if}~x=0,\\
&\mbox{linear}~~&&\mbox{otherwise},
\end{aligned}
\right.
$$
where $\mathbf e_k$ is given by $(\mathbf e_k)_i=1$ if $i=k$ and $(\mathbf e_k)_i=0$ if $i\ne k$. Take $\bar x=0$ and $\bar y=\bar z=0$. As $\bar x\in\inte \Omega$, it follows from \cite[Example~1.35] {Mordukhovich2006}  that
$
D_M^*S_{1\Omega}(\bar x\vbl\bar y)(y^*)=0 $ for all $y^*\in Y$.
We obviously see that the corresponding mapping $G$ defined in \eqref{G-semi} is inner semicontinuous relative to $\Omega\times\Theta$ and that
$
D_M^*(f\circ S_1)_{\Omega}^\Theta(\bar x|\bar z)(z^*)=\mathbb R$ for all $z^*\in\Theta$.
Therefore,  \eqref{chain-rule3} does not hold due to the relationships
$$
D_M^*(f\circ S_1)_{\Omega}^\Theta(\bar x\vbl\bar z)(z^*)=\mathbb R\not\subset\{0\}=D_M^* S_{1\Omega} (\bar x\vbl\bar y)\circ D_M^* f^\Theta(\bar y\vbl\bar z)(z^*)\;\mbox{ as }\;z^*\in\Theta,
$$
which confirms the claim of this example.}
\end{example}\vspace*{0.03in}

The second inclusion in \eqref{cod-normality} shows that under the strict differentiability of $f$, the chain rule \eqref{chain-rule3} is tighter than \eqref{chain-rule1}. Observe that assertions (i) and (ii) of Theorem~\ref{chain-rule} are independent from each other. Indeed, inclusion \eqref{chain-rule3} for the mixed contingent coderivative of the composition in (ii) is expressed via the {\it mixed} one $D_M^*S_\Omega^\Theta$, while not via its {\it normal} counterpart as in \eqref{chain-rule1}. Furthermore, it follows from the proof of Theorem~\ref{chain-rule}(i) that changing the inner semicontinuity assumption on mapping \eqref{G-semi} to its (weaker) {\it semicompactness} relative to $\Omega$ at $(\bar x,\bar z)$ leads us to the chain rule inclusions of types \eqref{chain-rule1} and \eqref{chain-rule2} with the replacement of $\bar y$ on the right-hand sides by the {\it union} over the set $G(\bar x,\bar z)$.\vspace*{0.02in}

Using now the obtained {\it coderivative characterizations} of {\it well-posedness}, we arrive at the efficient conditions for the fulfillment of the chain rules \eqref{chain-rule1} and \eqref{chain-rule2}.\vspace*{-0.03in}

{ 
\begin{corollary} In the general setting of Theorem~{\rm\ref{chain-rule}}, if either $S_1$ is metrically regular/linearly open relative to $\Omega\times Y$ around $(\bar x,\bar y)$ or
$S_2$ is Lipschitz-like relative to $Y\times\Theta$ around $(\bar y,\bar z)$, then both  chain rules in \eqref{chain-rule1} and \eqref{chain-rule2} are fulfilled.
\end{corollary}
\begin{proof}
Under the imposed well-posedness assumptions, the fulfillment of the qualification condition \eqref{mixedQC1} and the corresponding relative PSNC properties of Theorem~\ref{chain-rule}(i) are consequences of Theorems~\ref{criterion} and \ref{MRO}. Therefore, all the conclusions of the corollary follow from Theorem~\ref{chain-rule}(i).
\end{proof}}\vspace*{-0.03in}

The procedure developed in the proof of Theorem~\ref{chain-rule}(i) allows us to verify the preservation of the relative PSNC property of compositions under a more restrictive qualification condition in comparison with  \eqref{mixedQC1}.\vspace*{-0.03in}

\begin{proposition}\label{chain-rule-PSNC}In the setting of Theorem~{\rm\ref{chain-rule}}, assume that $S_1$ is PSNC relative to $\Omega \times Y$ at $(\bar x,\bar y)$, $S_2$ is PSNC relative to $Y\times \Theta$ at $(\bar y,\bar z)$, and the qualification condition
\begin{equation}\label{mixedQC1'}
D_M^*S_2^\Theta(\bar y\vbl\bar z)(0)\cap\ker D_N^*S_{1\Omega}(\bar x\vbl\bar y)=\{0\}
\end{equation}
holds. Then the composition $S$ is PSNC relative to $\Omega \times \Theta$ at $(\bar x,\bar z)$.
\end{proposition}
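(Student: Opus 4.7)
The strategy is to adapt the fuzzy-sum scheme used in the proof of Theorem~\ref{chain-rule}(i), but instead of identifying a coderivative element, we track strong norm convergence of the $X^*$-component. Start with arbitrary sequences $(\varepsilon_k, x_k, z_k, x_k^*, z_k^*)$ testing the relative PSNC of $S$ at $(\bar x,\bar z)$: $(x_k,z_k)\stackrel{\gph S|_\Omega}{\longrightarrow}(\bar x,\bar z)$, $\varepsilon_k\downarrow 0$, $x_k^*\rightharpoonup^* 0$, $z_k^*\to 0$, and $(x_k^*,-z_k^*)\in\widehat N_{\varepsilon_k}((x_k,z_k);\gph S|_\Omega)\cap[J_X(T(x_k;\Omega))\times Z^*]$. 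By inner semicontinuity of $G$ relative to $\Omega\times Z$, select $y_k\in G(x_k,z_k)$ with $y_k\to\bar y$, and lift as in \eqref{chain-rule6} to $(x_k^*,0,-z_k^*)\in\widehat N_{\varepsilon_k}((x_k,y_k,z_k);C_1\cap C_2)$ with $C_1=\gph S_1|_\Omega\times Z$ and $C_2=X\times\gph S_2$.

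Apply Proposition~\ref{fuzzy}(i) with accuracy $\varepsilon_k$ and auxiliary $\gamma_k\downarrow 0$ to obtain scalars $\lambda_k\ge 0$ and points $\hat w_k=(\hat x_k,\hat y_k,\hat z_k)\in C_1$, $\tilde w_k=(\tilde x_k,\tilde y_k,\tilde z_k)\in C_2$ close to $(x_k,y_k,z_k)$. The product structure forces
$$
\hat w_k^*=(\hat x_k^*,-\hat y_k^*,0),\qquad \tilde w_k^*=(0,\tilde y_k^*,-\tilde z_k^*)
$$
modulo $O(\varepsilon_k+\gamma_k)$ errors, with $(\hat x_k^*,-\hat y_k^*)\in\widehat N((\hat x_k,\hat y_k);\gph S_1|_\Omega)$ and $(\tilde y_k^*,-\tilde z_k^*)\in\widehat N((\tilde y_k,\tilde z_k);\gph S_2)$. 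The fuzzy identity $\lambda_k(x_k^*,0,-z_k^*)=\hat w_k^*+\tilde w_k^*$ yields componentwise $\hat x_k^*=\lambda_k x_k^*+o(1)$, $\tilde y_k^*=\hat y_k^*+o(1)$, and $\tilde z_k^*=\lambda_k z_k^*+o(1)$, while $\max\{\lambda_k,\|\hat w_k^*\|\}=1$.

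The pivotal step is to show $\lambda_k$ is bounded away from $0$. Suppose $\lambda_k\to 0$; boundedness of $\{x_k^*\}$ forces $\hat x_k^*\to 0$ strongly and $\tilde z_k^*\to 0$, while the normalization drives $\|\hat y_k^*\|_{Y^*}$ to stay away from zero. Pass to a weak-$*$ cluster $\hat y_k^*\rightharpoonup^* y^*$; then $\tilde y_k^*\rightharpoonup^* y^*$ with $\tilde z_k^*\to 0$, placing $y^*\in D_M^*S_2(\bar y|\bar z)(0)$. To obtain $y^*\in\ker D_N^*(S_1)_\Omega(\bar x|\bar y)$, invoke Proposition~\ref{JT} together with the positive homogeneity $\lambda_k J_X(T(\hat x_k;\Omega))=J_X(T(\hat x_k;\Omega))$ to produce $\bar x_k^*\in J_X(T(\hat x_k;\Omega))$ with $\|\bar x_k^*-\hat x_k^*\|_{X^*}\to 0$; then $(\bar x_k^*,-\hat y_k^*)\in\widehat N_{o(1)}((\hat x_k,\hat y_k);\gph S_1|_\Omega)\cap[J_X(T(\hat x_k;\Omega))\times Y^*]$ with $\bar x_k^*\to 0$ and $\hat y_k^*\rightharpoonup^* y^*$, giving $y^*\in D_N^*(S_1)_\Omega(\bar x|\bar y)(0)$. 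The qualification condition \eqref{mixedQC1'} then yields $y^*=0$, and PSNC of $S_2$ forces $\|\tilde y_k^*\|\to 0$, hence $\|\hat y_k^*\|\to 0$---contradicting the non-vanishing lower bound.

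Hence $\lambda_k\ge\lambda_0>0$ and we normalize $\lambda_k\equiv 1$. Rerunning the weak-$*$ cluster argument with $\hat x_k^*=x_k^*+o(1)\rightharpoonup^* 0$ produces a cluster point $y^*\in D_M^*S_2(\bar y|\bar z)(0)\cap\ker D_N^*(S_1)_\Omega(\bar x|\bar y)=\{0\}$; thus $\hat y_k^*\rightharpoonup^* 0$, and PSNC of $S_2$ upgrades this to $\|\hat y_k^*\|\to 0$ strongly. Finally, applying PSNC of $S_1$ relative to $\Omega$ at $(\bar x,\bar y)$ to the sequence $(\bar x_k^*,-\hat y_k^*)$ (with $\bar x_k^*\rightharpoonup^* 0$, $\hat y_k^*\to 0$) gives $\|\bar x_k^*\|_{X^*}\to 0$, whence $\|x_k^*\|_{X^*}\to 0$, as required. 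The principal obstacle throughout is enforcing the tangent-cone-image condition on the fuzzy-sum dual element $\hat x_k^*$: since $J_X(T(\cdot;\Omega))$ is neither linear nor translation-invariant, $\hat x_k^*\in J_X(T(\hat x_k;\Omega))$ is not automatic and must be recovered via Proposition~\ref{JT} combined with the positive-scaling invariance of $T(\hat x_k;\Omega)$.
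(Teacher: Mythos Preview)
Your proposal is correct and follows essentially the same route as the paper's proof: lift to $C=C_1\cap C_2$, apply the fuzzy intersection rule (Proposition~\ref{fuzzy}(i)), rule out $\lambda_k\to 0$ via the qualification condition~\eqref{mixedQC1'} and the PSNC of $S_2$, then with $\lambda_k\equiv 1$ use~\eqref{mixedQC1'} again plus PSNC of $S_2$ to force $\hat y_k^*\to 0$, and finish with the relative PSNC of $S_1$. The only minor deviation is in the $\lambda_k\to 0$ branch: the paper simply uses $(0,-\hat y_k^*)$ (since $0\in J_X(T(\hat x_k;\Omega))$ trivially and $\|\hat x_k^*\|\to 0$ supplies the $\varepsilon$-normal), whereas you route through Proposition~\ref{JT} and cone homogeneity---both work, yours is just slightly more elaborate than necessary.
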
\vspace*{-0.04in}
\begin{proof} It is similar to that of Theorem~\ref{chain-rule}(i); here is the  outline. In the proof above,  we may assume that $x_k^*\rightharpoonup^*0$ and $z_k^*\to0$ as $k\to\infty$ and $\lambda_k=1$ for all $k\in\mathbb N$. Similar to the proof of Theorem ~\ref{chain-rule}(i), an element $y^*\in D_M^*S_2^\Theta(\bar y\vbl\bar z)(0)\cap\ker D_N^*S_{1\Omega}(\bar x\vbl\bar y)$ is constructed. Using the qualification condition \eqref{mixedQC1'} yields $y^*=0$. A sequence $\{\tilde y_k^*\}$ is obtained from \eqref{chain-rule11}. Then we get from the relative PSNC property of $S_2$ that $\tilde y_k^*\to0$, which ensures that $\hat y_k^*\to 0$ from \eqref{chain-rule13}. It follows from the PSNC property of $S_1$ relative to $\Omega$ at $(\bar x,\bar y)$ that $\bar x_k^*\to0$, and so $x_k^*\to 0$, which therefore justifies the claimed relative PSNC property of $S$ at $(\bar x,\bar z)$.
\end{proof}\vspace*{-0.03in}

The rest of this section is devoted to deriving {\it sum rules} for the relative contingent coderivatives. Note that the structures of the sum rules below are different from the chain rules obtained above. In contrast to Theorem~\ref{chain-rule}, the qualification condition and the PSNC properties are now formulated {\it entirely} via the {\it relative} constructions.

To proceed in this way, we first establish a novel variational result of its own fundamental importance --- namely, the {\it extremal principle} concerning the new notion of {\it relative set extremality} in the following sense; cf.\ \cite{Mordukhovich2006} in the unconstrained case. In this relative set extremality, a product space of $X$ and $Y$ is considered which is motivated by the consideration that the perturbation of an element of a tangent cone may not still be in the tangent cone. As such the perturbation in the relative set extremality is only allowed in the space $Y$ and so our new relative set extremality is stronger than the usual set extremality in \cite{Mordukhovich2006}.
\vspace*{0.03in}

Before proceeding, we make a note that with a closed and convex set $\Theta$, the chain rules (\ref{chain-rule1}) and (\ref{chain-rule2}) in Theorem \ref{chain-rule} are able to provide sufficient conditions for the relative Lipschitz-like property of a composite multifunction. However, for the sum rule establishing below to provide sufficient conditions for the relative Lipschitz-like property of the sum of two multifunctions, the set $\Theta$ needs to be the whole space. To illustrate this, consider $\Omega:=[0,\infty)$, $\Theta:=\mathbb R\times\{0\}$, $S_1(x):=\{(x,0),(\sqrt x,x)\}$, and $S_2(x)=\{(x,0),(\sqrt x,-x)\}$ for all $x\geq0$. It is obvious that $S_i|_\Omega^\Theta(x)=\{(x,0)\}$ for all $x\in\Omega$, $i=1,2$, and thus both $S_1$ and $S_2$ are Lipschitz-like relative to $\Omega\times \Theta$ around $(0,0,0)$. We also have $(S_1+S_2)|_\Omega^\Theta(x)=\{(2x,0),(2\sqrt{x},0)\}$ for all $x\in\Omega$. It is clear that the modified constraint qualification in
 (\ref{sumQC}) holds but $(S_1+S_2)|_\Omega^\Theta(x)$ is not Lipschitz-like relative to $\Omega\times \Theta$ around $(0,0,0)$ and that all the conditions in Theorem \ref{sumrule} below are satisfied. Therefore, to establish the relative sum rules for the constrained coderivatives, the set $\Theta$ is taken as the whole space $Y$.\vspace*{0.03in}

 Now we introduce the new notion of {\it relative local extremality} for systems of sets.\vspace*{-0.03in}

\begin{definition}
Let $X$ and $Y$ be Banach spaces, $\Lambda_1$ and $\Lambda_2$ be two nonempty subsets of the product space $X\times Y$, $\Omega$ be a nonempty subset of $X$, and $(\bar x,\bar y)\in\Lambda_1\cap\Lambda_2$. We say that $(\bar x,\bar y)$ is a {\sc local extremal point} of the set system $\{\Lambda_1,\Lambda_2\}$
{\sc relative} to $\Omega\times Y$ if there exist a neighborhood $U$ of $(\bar x,\bar y)$ and a sequence of $\{b_k\}$ such that $b_k\to0$ as $k\to\infty$, ${\rm proj}_X(\Lambda_1\cup\Lambda_2)\subset \Omega$, and
$$
\big(\Lambda_1+(0,b_k)\big)\cap\Lambda_2\cap U=\emptyset ~~~\operatorname{for~all~sufficiently~large~}\;k\in\mathbb N.
$$
\end{definition}\vspace*{-0.03in}

The relative local extremality of two sets at a common point means that they can be locally ``pushed apart" in the common directions of one space at the point in question by a small perturbation of even one of them. Moreover, we notice that the relative local extremality is stronger than the unconstrained one in \cite{Mordukhovich2006}. By developing the proof of \cite[Theorem 2.10]{Mordukhovich2006}, we obtain the relative extremal principle as follows.\vspace*{-0.03in}

\begin{lemma}\label{extremal} Let $X$ and $Y$ be reflexive Banach spaces, $\Lambda_1$ and $\Lambda_2$ be nonempty closed subsets of the product space $X\times Y$, $\Omega$ be a closed and convex subset of $X$, and $(\bar x,\bar y)$ be a local extremal point of the set system $\{\Lambda_1,\Lambda_2\}$ relative to $\Omega\times Y$ and $Y$. Then for any $\varepsilon>0$, there exist $(x_i,y_i)\in \Lambda_i\cap[(\bar x+\varepsilon B_X)\times(\bar y+\varepsilon B_Y)]$ and $(x_i^*,y_i^*)\in X^*\times Y^*$, $i=1,2$, satisfying the conditions
\begin{equation}\label{extr0}
\begin{aligned}
&(x_i^*,y_i^*)\in\big[\widehat N\big((x_i,y_i);\Lambda_i\big)+\varepsilon (B_{X^*}\times B_{Y^*})\big]\cap {\big[J_X(T(x_i;\Omega))\times Y^*\big]},\\
&(x_1^*,y_1^*)+(x_2^*,y_2^*)=0,~~~\|(x_1^*,y_1^*)\|_{X^*\times Y^*}+\|(x_2^*,y_2^*)\|_{X^*\times Y^*}=1.
\end{aligned}
\end{equation}
\end{lemma}
\begin{proof}
The assumed set extremality gives us a neighborhood $U$ of $(\bar x,\bar y)$ such that for any $\varepsilon>0$, there exist $b\in Y$ with $\|b\|_Y\leq\varepsilon^3/2$ and $(\Lambda_1+(0,b))\cap\Lambda_2\cap U=\emptyset$. Suppose for simplicity that $U=X\times Y$ and $\varepsilon<1/2$. Clearly, $(\bar x,\bar y)$ is a local extremal point of the system $\{\Lambda_1,\Lambda_2\}$ in the sense of \cite{Mordukhovich2006}. Moreover, note that the product space $X\times Y$ normed with $\|(x,y)\|_{X\times Y}:=\sqrt{\|x\|_X^2+\|y\|_Y^2}$ is a Fr$\acute{\text{e}}$chet smooth
space. Following the proof of \cite[Theorem 2.10]{Mordukhovich2006}, we find elements
$$
(\bar x_i,\bar y_i)\in\Lambda_i\cap[(\bar x+\varepsilon B_X)\times(\bar y+\varepsilon B_Y)] ~\mbox{and}~(u_i^*,v_i^*)\in\widehat N\big((\bar x_i,\bar y_i);\Lambda_i\big),~~i=1,2,
$$
satisfying the inclusions
$$
(u_1^*,v_1^*)\in-(x^*,y^*)+\varepsilon B_{X^*\times Y^*}~~\mbox{and}~~(u_2^*,v_2^*)\in(x^*,y^*)+\varepsilon B_{X^*\times Y^*},
$$
$$
(x^*,y^*)=\Big(\frac{J_X(\bar x_1-\bar x_2)}{\sqrt{\|\bar x_1-\bar x_2\|_X^2+\|\bar y_1-\bar y_2+b\|_Y^2}},\frac{J_Y(\bar y_1-\bar y_2+b)}{\sqrt{\|\bar x_1-\bar x_2\|_X^2+\|\bar y_1-\bar y_2+b\|_Y^2}}\Big),
$$
{ where the calculation of the derivative of the norm in the last equality comes from \eqref{diffduality}}. Since $\Omega$ is convex and $\bar x_i\in{\rm proj}_X(\Lambda_i)\subset\Omega, i=1,2,$ we get $\bar x_2-\bar x_1\in T(\bar x_1;\Omega)$ and $\bar x_1-\bar x_2\in T(\bar x_2;\Omega)$, which yields $-x^*\in J_X\big(T(\bar x_1;\Omega)\big)$ and $x^*\in J_X\big(T(\bar x_2;\Omega)\big)$. \color{black}  Putting $(x_i,y_i):=(\bar x_i,\bar y_i)$ and $(x_i^*,y_i^*):=(-1)^i(x^*,y^*)/2$ verifies \eqref{extr0}.
\end{proof}

From now on, we assume that the spaces under consideration are {\it Hilbert}. This is needed to adjust the {\it tangent} (primal) and {\it normal} (dual) elements in the our mixed contingent coderivatives. The next result, which follows from the relative extremal principle of Lemma~\ref{extremal}, establishes a new {\it relative fuzzy intersection rule} that extends \cite[Lemma~3.1]{Mordukhovich2006} from the unconstrained version. The unconstrained fuzzy rules of the latter type have been well recognized in variational analysis; see, e.g., \cite{Fabian2024,ioffe,Mordukhovich2006}.\vspace*{-0.03in}

\begin{lemma}\label{fuzzy-relative}
Let $X$ be a Hilbert space, and let $G_1,G_2\subset X$ be closed around $(\bar x,\bar y)\in G_1\cap G_2$. Suppose that $G_1\cup G_2\subset\Omega$ for some closed and convex set $\Omega\subset X$. Fix $\varepsilon>0$ and $x^*\in\widehat N_\varepsilon (\bar x;G_1\cap G_2)\cap T(\bar x;\Omega)$. Then for any $\gamma>0$, there exist $\lambda\geq0$,  $x_i\in G_i\cap[\bar x+\gamma B_X]$, and $x_i^*\in[\widehat N(x_i;G_i)+(\varepsilon+\gamma)B_X]\cap T(x_i;\Omega)$, $i=1,2$, such that
\begin{equation}\label{fuzzy-relative0}
\lambda x^*-x_1^*-x_2^*\in\gamma B_X,~~\max\{\lambda,\|x_1^*\|_X\}=1.
\end{equation}
\end{lemma}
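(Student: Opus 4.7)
The plan is to reduce Lemma~\ref{fuzzy-relative} to the relative extremal principle of Theorem~\ref{extremal} by constructing an auxiliary set system in an enlarged product Hilbert space whose relative local extremality directly encodes the $\varepsilon$-normal hypothesis.

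First I would sharpen the $\widehat N_\varepsilon$-condition to the quantitative one-sided form: for any $\eta\in(0,\gamma/4)$ there exists $r\in(0,\gamma/2)$ such that
\begin{equation*}
\langle(x^*,y^*),\,w-(\bar x,\bar y)\rangle \le (\varepsilon+\eta)\|w-(\bar x,\bar y)\|_{X\times Y}
\end{equation*}
for every $w\in\Theta_1\cap\Theta_2$ with $\|w-(\bar x,\bar y)\|_{X\times Y}<r$. Next, working in $\widetilde E := (X\times Y)\times(X\times Y)\times\mathbb R$, I would play the first two copies of $X\times Y$ as the ``$X$'' of Theorem~\ref{extremal} (constrained by $\Omega\times\Omega$ in their $X$-components), with the $\mathbb R$-coordinate serving as the ``$Y$'' slot in which perturbations $b_k$ live. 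Define
\begin{equation*}
\Lambda_1 := \Theta_1\times\Theta_2\times\{0\},\quad \Lambda_2 := \bigl\{(w,w,t) : \operatorname{proj}_X w\in\Omega,\ t\le\langle(x^*,y^*),w-(\bar x,\bar y)\rangle-(\varepsilon+\eta)\|w-(\bar x,\bar y)\|\bigr\}.
\end{equation*}
Both sets are closed around the reference triple, have $\operatorname{proj}_{X\times X}(\Lambda_1\cup\Lambda_2)\subset\Omega\times\Omega$, and contain $(((\bar x,\bar y),(\bar x,\bar y),0))$. A perturbation $b_k := \tau_k>0$, $\tau_k\downarrow 0$, of $\Lambda_1$ in the $\mathbb R$-direction renders $(\Lambda_1+(0,0,\tau_k))\cap\Lambda_2$ empty near the reference: such a point would require $t=\tau_k>0$ and simultaneously $t\le 0$ by Step~1, a contradiction. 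Thus the reference point is a local extremal point of $\{\Lambda_1,\Lambda_2\}$ relative to $\Omega\times\Omega$.

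Applying Theorem~\ref{extremal} yields nearby points $p_i\in\Lambda_i$ within $\eta$ of the reference and dual vectors $p_i^*\in[\widehat N(p_i;\Lambda_i)+\eta B]\cap[T(\operatorname{proj}_{X\times X}p_i;\Omega\times\Omega)\times Y^*\times Y^*\times\mathbb R]$ summing to zero with $\|p_1^*\|+\|p_2^*\|=1$. The product structure of $\Lambda_1$ factorizes its regular normal cone into $\widehat N((x_1,y_1);\Theta_1)\times\widehat N((x_2,y_2);\Theta_2)\times\mathbb R$, producing the pairs $(x_i^*,y_i^*)\in\widehat N((x_i,y_i);\Theta_i)+(\varepsilon+\gamma)(B_X\times B_Y)$. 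The regular normal cone of the epigraph-diagonal set $\Lambda_2$, computed at a point near the reference, decomposes as an antidiagonal part plus a Lagrange multiplier $\lambda\ge 0$ times the pair $((x^*,y^*),(x^*,y^*))/2$ coming from the subgradient of $\phi(w) := \langle(x^*,y^*),w-(\bar x,\bar y)\rangle-(\varepsilon+\eta)\|w-(\bar x,\bar y)\|$, modulo an $\widehat N(x;\Omega)\times\{0\}$-contribution in each $X$-coordinate. In the Hilbert setting one has $\widehat N(x;\Omega)\cap T(x;\Omega)=\{0\}$ for closed convex $\Omega$, so the enforced intersection with $T(\cdot;\Omega)\times Y^*$ from Theorem~\ref{extremal} kills the $\Omega$-normal contribution, leaving the clean relation $\lambda(x^*,y^*)-(x_1^*,y_1^*)-(x_2^*,y_2^*)\in\gamma(B_X\times B_Y)$ and automatically placing each $(x_i^*,y_i^*)$ in $T(x_i;\Omega)\times Y$ (Proposition~\ref{JT} is invoked to transport the tangent-cone condition from $\bar x$ to the nearby $x_i$). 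A final rescaling by $\max\{\lambda,\|(x_1^*,y_1^*)\|\}$ produces the stated max-normalization.

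The main obstacle is the careful computation of $\widehat N(\cdot;\Lambda_2)$ at a point near the reference, where the defining norm $\|w-(\bar x,\bar y)\|$ is nonsmooth; the subdifferential of this norm creates the $(\varepsilon+\eta)$-ball contribution that, combined with the linear term $(x^*,y^*)$, produces exactly the ``$\lambda(x^*,y^*)$-up-to-$\gamma$'' structure after the $T\times Y^*$ intersection filters out the $\Omega$-normal parts. Here the hypothesis $(x^*,y^*)\in T(\bar x;\Omega)\times Y$ is indispensable: it guarantees that the perturbation direction is tangentially compatible with $\Omega$, ensuring that the Lagrange coefficient $\lambda\ge 0$ survives the tangent-cone intersection rather than collapsing to zero.
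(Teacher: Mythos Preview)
Your doubling construction has a structural gap that the paper's single-copy approach is designed to avoid. Because $\Lambda_2=\{(w,w,t):\operatorname{proj}_X w\in\Omega,\ t\le\phi(w)\}$ must explicitly carry the constraint $\operatorname{proj}_X w\in\Omega$ (otherwise $\operatorname{proj}_{(X\times Y)^2}\Lambda_2$ is the full diagonal and fails the hypothesis of Theorem~\ref{extremal}), its regular normal cone picks up a term $(n,0)$ with $n\in\widehat N(x_0;\Omega)$ that is \emph{not} forced to be small. Your claim that the intersection with $T(\cdot;\Omega)\times Y$ ``kills'' this contribution is incorrect: Theorem~\ref{extremal} places the whole output vector $p_2^*$ in $T\times Y\times T\times Y\times\mathbb R$, not the normal-cone part alone, and in a convex cone setting a sum $t+n$ with $t\in T$, $n\in N$ can lie in $T$ without $n=0$ (take $\Omega$ a half-space). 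The sharpest bound one can squeeze out is $\|n\|\le\lambda(\varepsilon+\eta)+O(\eta)$, which is of order $\varepsilon$, not $\gamma$. You could try to absorb $(n,0)$ into the $(\varepsilon+\gamma)$-fuzz on one of the $(x_i^*,y_i^*)$, but then you lose $x_i^*\in T(x_i;\Omega)$, since adding a nonzero normal direction to a tangent vector need not stay tangent.

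The paper sidesteps this by working in $X\times(Y\times\mathbb R)$ with $\Lambda_1=\{(x,y,\alpha):(x,y)\in\Theta_1,\ \alpha\ge 0\}$ and $\Lambda_2=\{(x,y,\alpha):(x,y)\in\Theta_2,\ \alpha\le\phi(x,y)\}$. Here the projection condition $\operatorname{proj}_X\Lambda_i\subset\Omega$ is inherited from $\operatorname{proj}_X\Theta_i\subset\Omega$, so no explicit $\Omega$-cut appears in $\Lambda_2$ and no stray $N(\Omega)$ term arises. The $\lambda(x^*,y^*)$ contribution then emerges from the structure of $\widehat N(\cdot;\Lambda_2)$ as $(\hat x_2^*+\lambda_2 x^*,\hat y_2^*+\lambda_2 y^*)\in\widehat N_{\lambda_2\varepsilon+O(\tau)}(\cdot;\Theta_2)$, and the tangent condition on $x_2^*$ is recovered because both $\hat x_2^*$ and $x^*$ lie (approximately, via Proposition~\ref{JT}) in the convex cone $T(u;\Omega)$, so their positive combination does too. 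This is exactly where the hypothesis $x^*\in T(\bar x;\Omega)$ does its work---it makes $\lambda_2 x^*$ tangent-compatible, whereas in your setup the obstruction $n$ is normal and cannot be handled the same way.
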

\begin{proof}
Taking any $\gamma>0$ and $x^*\in\widehat N_\varepsilon (\bar x;G_1\cap G_2)\cap T(\bar x;\Omega)$, we deduce from Proposition~\ref{JT} that there exists $\rho>0$ such that
\begin{equation}\label{fuzzy-relative1}
x^*\in T(x;\Omega)+(\gamma/30)B_X\;\mbox{ for all }\; x\in\Omega\cap[\bar x+\rho B_X].
\end{equation}
Define $\tau:=\frac{1}{30}\min\{\gamma,\rho,1\}$ \color{black} and find a neighborhood $U$ of $\bar x$ with
\begin{equation}\label{fuzzy-relative2}
\langle x^*,x-\bar x\rangle-(\varepsilon+\tau)\|x-\bar x\|_X\le 0\;\mbox{ on }\;G_1\cap G_2\cap U.
\end{equation}
Consider the closed subsets of $X\times \mathbb R$ given by
\begin{align*}
&\Lambda_1:=\big\{(x,\alpha)\in X\times \mathbb R~\big|~x\in G_1,~\alpha\ge 0\big\},\\
&\Lambda_2:=\{(x,\alpha)\in X\times \mathbb R~|~x\in G_2,~\alpha\leq \langle x^*,x-\bar x\rangle-(\varepsilon+\tau)\|x-\bar x\|_X\}
\end{align*}
and observe that $(\bar x,0)\in \Lambda_1\cap \Lambda_2$ and that
$(\Lambda_1+(0,\nu))\cap\Lambda_2\cap(U\times \mathbb R)=\emptyset$ as $\nu>0$
due to \eqref{fuzzy-relative2} and the structure of $\Lambda_i$. Thus $(\bar x,0)$ is a local extremal point relative to $\Omega$ of $\{\Lambda_1,\Lambda_2\}$. Applying Lemma~\ref{extremal} to this system in the space $X\times \mathbb R$ with the norm $\|(x,\alpha)\|:=\sqrt{\|x\|_X^2+|\alpha|^2}$ {  and $\varepsilon:=\tau/6$ in 
Lemma~\ref{extremal}}, gives us $(\bar x_i,\alpha_i)\in\Lambda_i$ and
\begin{equation}\label{fuzzy-relative3}
(\hat x_i^*,\lambda_i)\in\widehat N\big((\bar x_i,\alpha_i);\Lambda_i\big)\cap[T(\bar x_i;\Omega)\times \mathbb R+\tau(B_X\times \mathbb R)]
\end{equation}
for $i=1,2$ satisfying the conditions
\begin{align}
&\|\bar x_i-\bar x\|_X+|\alpha_i|\leq\tau,\quad i=1,2,\notag\\
&\|\hat x_1^*+\hat x_2^*\|_X+|\lambda_1+\lambda_2|  \leq\tau,\label{fuzzy-relative5}\\
&{ 0.5-\tau\leq\sqrt{\|\hat x_i^*\|_X^2+|\lambda_i|^2}\leq 0.5+\tau},\quad i=1,2.\label{fuzzy-relative6}
\end{align}
It is easy to deduce from \eqref{fuzzy-relative6} the estimate
\begin{equation}\label{fuzzy1'}
\max\{\|u^*\|_X,|\lambda_i|\}\ge 1/5~~~\mbox{if }~u^*\in \hat x_i^*+2\tau B_X.
\end{equation}
{ 
Indeed, for any $u^*\in \hat x_i^*+2\tau B_X$, we get from $\tau\leq\frac{1}{30}$ \color{black} that
$$
\begin{aligned}
\max\{\|u^*\|_X,|\lambda_i|\}\geq &\max\{\|\hat x_i^*\|_X,|\lambda_i|\}-2\tau=\sqrt{\max\{\|\hat x_i^*\|_X^2,|\lambda_i|^2\}}-2\tau\\
\geq&\sqrt{\frac{\|\hat x_i^*\|_{X}^2+|\lambda_i|^2}{2}}-2\tau\geq\frac{0.5-\tau}{\sqrt2}-2\tau\geq\frac{1}{5}.
\end{aligned}
$$}
It follows from \eqref{fuzzy-relative3} and $\alpha\geq 0$ in the definition of $\Lambda_1$ that $\lambda_1\le 0$, $\hat x_1^*\in\widehat N(\bar x_1;G_1)\cap[T(\bar x_1;\Omega)+\tau B_X]$, which ensures the existence of  $u_1^*\in \hat x_1^*+\tau B_X$ such that
\begin{equation}\label{u1}
u_1^*\in\big[\widehat N(\bar x_1;G_1)+\tau B_X\big]\cap T(\bar x_1;\Omega).
\end{equation}
Moreover, \eqref{fuzzy-relative3} also tells us that
\begin{equation}\label{fuzzy4}
\limsup\limits_{(x,\alpha)\stackrel{\Lambda_2}{\longrightarrow}(\bar x_2,\alpha_2)}
\frac{\langle \hat x_2^*,x-\bar x_2\rangle+\lambda_2(\alpha-\alpha_2)}{\sqrt{\|x-\bar x_2\|_X^2+|\alpha-\alpha_2|^2}}\leq0.
\end{equation}
Exploiting the structure of the set $\Lambda_2$, we get $\lambda_2\geq0$ and
\begin{equation}\label{fuzzy5}
\alpha_2\leq\langle x^*,{ \bar x_2}-\bar x\rangle-(\varepsilon+\tau)\|{ \bar x_2}-\bar x\|_X.
\end{equation}
If the inequality in \eqref{fuzzy5} is strict, then \eqref{fuzzy4} yields $\lambda_2=0$ and so ${ \hat x_2^*}
\in\widehat N(\bar x_2;G_2)\cap[T(\bar x_2;\Omega)+\tau B_X]$. Hence $|\lambda_1|\leq 1/30$ \color{black} due to \eqref{fuzzy-relative5}, and there exists $u_2^*\in [\hat x_2^*+\tau B_X]\cap T(\bar x_2;\Omega)$ satisfying the inclusion
$$
u_2^*\in\big[\widehat N(\bar x_2;G_2)+\tau B_X]\cap T(\bar x_2;\Omega).
$$
In this case, \eqref{fuzzy1'} gives us the estimate $\|u_1^*\|_X\geq 1/5$, and therefore $\eqref{fuzzy-relative0}$ holds with $x_i=\bar x_i$, $x_i^*=u_i^*/\|u_1^*\|_X$ as $i=1,2$, and $\lambda=0$.

Examine now the case where \eqref{fuzzy5} becomes an equality. Take $(x,\alpha)\in\Lambda_2$ with
$$
\alpha=\langle x^*,x-\bar x\rangle-(\varepsilon+\tau)\|x-\bar x\|_X,~~(x,y)\in G_2\setminus\{\bar x_2\}.
$$
Substituting the latter into \eqref{fuzzy4} and denoting $\sigma:=\frac{\tau}{2+\varepsilon+\|x^*\|_X}$, we find a neighborhood $V$ of $x_2$ such that
\begin{equation}\label{fuzzy6}
\begin{array}{ll}
&\langle \hat x_2^*,x-\bar x_2\rangle+\lambda_2(\alpha-\alpha_2)
\leq\sigma\|(x,\alpha)-(\bar x_2,\alpha_2)\|_{X\times\mathbb R}\\
&\leq\sigma\big(\|x-\bar x_2\|_X+|\alpha-\alpha_2|\big)\;\mbox{ for all }\;x\in G_2\cap V
\end{array}
\end{equation}
with the corresponding number $\alpha$ satisfying the equality
$$
\alpha-\alpha_2=\langle x^*,x-\bar x_2\rangle+(\varepsilon+\tau)\big(\|\bar x_2-\bar x\|_X-\|x-\bar x\|_X\big).
$$
Then the triangle inequality tells us that
$$
|\alpha-\alpha_2|\leq\big(\|x^*\|_X+\varepsilon+\tau\big)\|x-\bar x_2 \color{black}\|_X.
$$
Observe further that the left-hand side $\vartheta$ in \eqref{fuzzy6} can be represented as
$$
\begin{aligned}
\vartheta=\langle \hat x_2^*+\lambda_2x^*,x-\bar x_2\rangle+\lambda_2(\varepsilon+\tau)\big(\|\bar x_2-\bar x\|_X-\|x-\bar x\|_X),\color{black}
\end{aligned}
$$
which allows us to deduce from \eqref{fuzzy-relative6} and \eqref{fuzzy6} the estimates
$$
\begin{aligned}
\langle \hat x_2^*+\lambda_2x^*,x-\bar x_2\rangle\leq&\big[\lambda_2(\varepsilon+\tau)+\sigma(\|x^*\|_X+\varepsilon+\tau+1)\big]\|x-\bar x_2\|_X\\
\leq&(\lambda_2\varepsilon+2\tau)\|x-\bar x_2\|_X
\end{aligned}
$$
for all $x\in G_2\cap V$. This readily yields
\begin{equation}\label{fuzzy7}
\hat x_2^*+\lambda_2x^*\in\widehat N_{\lambda_2\varepsilon+2\tau}(\bar x_2;G_2).
\end{equation}
It follows from $\hat x_2^*\in T(\bar x_2;\Omega)+\tau B_X$ and \color{black} Proposition~\ref{JT} that there is $\rho'\in(0,\tau)$ with
\begin{equation}\label{fuzzy8}
\hat x_2^*\in T(x;\Omega)+2\tau B_X\color{black}\;\mbox{ whenever }\;x\in \Omega\cap(\bar x_2+\rho' B_X).
\end{equation}
Then using the description of $\varepsilon$-normals given in \cite[p.\ 222]{Mordukhovich2006}) gives us $u\in G_2\cap[\bar x_2+\rho'B_X]$ satisfying the inclusion
\begin{equation}\label{ff}
\hat x_2^*+\lambda_2x^*\in\widehat N(u;G_2)+(\lambda_2\varepsilon+3\tau)B_X.
\end{equation}
Thus we get $x^*\in T(u;\Omega)+\frac{\gamma}{30}B_X$ and $\hat x_2^*\in T(u;\Omega)+2\tau B_X$ \color{black} from \eqref{fuzzy-relative1} and \eqref{fuzzy8}, respectively. This brings us to $\hat x_2^*+\lambda_2 x^*\in T(u;\Omega)+2\tau\color{black}+\frac{\lambda_2\gamma}{30})B_X$ and, being combined with \eqref{ff}, yields the existence of $u_2^*\in T(u;\Omega)$ such that
$$
u_2^*\in\widehat N(u;G_2)+\Big(\lambda_2\varepsilon+5\tau\color{black}+\frac{\lambda_2\gamma}{30}\Big)B_X.
$$
Furthermore, it follows from \eqref{fuzzy-relative5} that
$$
-\hat x_1^*=\hat x_2^*-(\hat x_1^*+\hat x_2^*)\in\hat x_2^*+\tau B_X,
$$
which implies that $-u_1^*\in\hat x_2^*+2\tau B_X$ by $u_1^*\in\hat x_1^*+\tau B_X$. Hence we get from \eqref{fuzzy1'} that $\eta:=\max\{\|u_1^*\|_X,\lambda_2\}\ge 1/5$. This finally ensures that $\eqref{fuzzy-relative0}$ holds with $\lambda:=\frac{\lambda_2}{\eta}$, $x_1:=\bar x_1$, $x_2:=u$, $x_1^*:=\frac{u_1^*}{\eta}$, and $x_2^*:=\frac{u_2^*}{\eta}$, which completes the proof.
\end{proof}

Now we are ready to establish sum rules for the relative contingent coderivatives. The proof of this theorem follows directly by replacing the fuzzy intersection rule in the demonstration of the sum rule in \cite[Theorem 3.10]{Mordukhovich2006} with our newly proposed constrained version given above. Consequently, the formal proof is omitted here.\vspace*{-0.03in}

\begin{theorem}\label{sumrule}
Let $X$ and $Y$ be Hilbert spaces, $\Omega\subset X$ be a closed and convex set, $S_i:X\rightrightarrows Y$ as $i=1,2$ with $(\bar x,\bar y)\in \operatorname{gph}(S_1+S_2)|_\Omega$. Define
\begin{equation}\label{G-sum}
G(x,y):=\big\{(y_1,y_2)\in Y\times Y~\big|~y_1\in S_1(x),~y_2\in S_2(x),~y_1+y_2=y\big\},
\end{equation}
fix $(\bar y_1,\bar y_2)\in G(\bar x,\bar y)$, and assume that $G$ is inner semicontinuous relative to $\Omega\times Y$ at $(\bar x,\bar y,\bar y_1,\bar y_2)$. Assume also that $\operatorname{gph}S_1|_\Omega$ and $\operatorname{gph}S_2|_\Omega$ are closed around
$(\bar x,\bar y_1)$ and $(\bar x,\bar y_2)$, respectively, that either $S_1$ is PSNC relative to $\Omega$ at $(\bar x,\bar y_1)$ or $S_2$ is PSNC relative to $\Omega$ at $(\bar x,\bar y_2)$, and that $\{S_1,S_2\}$ satisfies the qualification condition
\begin{equation}\label{sumQC}
D_M^*(S_1)_\Omega(\bar x\vbl\bar y_1)(0)\cap(-D_M^*(S_2)_\Omega(\bar x\vbl\bar y_2))(0)=\{0\}.
\end{equation}
Then for all $y^*\in Y$ we have the sum rule inclusions
\begin{eqnarray}
&&D_M^*(S_1+S_2)_\Omega(\bar x\vbl\bar y)(y^*)\subset D_M^*(S_1)_\Omega(\bar x\vbl\bar y_1)(y^*)+D_M^*(S_2)_\Omega(\bar x\vbl\bar y_2)(y^*),\label{sumrule1}\\
&&D_N^*(S_1+S_2)_\Omega(\bar x\vbl\bar y)(y^*)\subset D_N^*(S_1)_\Omega(\bar x\vbl\bar y_1)(y^*)+D_N^*(S_2)_\Omega(\bar x\vbl\bar y_2)(y^*).\label{sumrule2}
\end{eqnarray}
\end{theorem}\vspace*{-0.03in}

A striking consequence of Theorem~\ref{sumrule} and our major characterizations of well-posedness in Theorem~\ref{criterion} is the following assertion saying that the sum rules for the contingent coderivatives hold {\it unconditionally} in the general setting provided that at least one of the multifunctions is {\it relative Lipschitz-like} around the reference point.\vspace*{-0.03in}

\begin{corollary}\label{sumrileLip} In the general setting of Theorem~{\rm\ref{sumrule}}, with qualification condition \eqref{sumQC} imposed, suppose that at least one of the mapping $S_i$, $i=1,2$ is Lipschitz-like relative to $\Omega$ around the corresponding point. Then both coderivative sum rules in \eqref{sumrule1} and \eqref{sumrule2} are fulfilled.
\end{corollary}
\begin{proof}  Follows from Theorem~\ref{sumrule} and the equivalence in Theorem~\ref{criterion}(i,iii). 
\end{proof}

It is worth noting that with the new estimate of the constrained coderivative via the traditional mixed coderivative in Corollary \ref{upper-suff}, it is possible to obtain some upper estimates of chain rules and sum rules in Theorems 4.2 and 4.8, respectively.\vspace*{0.02in}

Finally in this section, we establish the {\it preservation} of the {\it relative PSNC property} under summation of multifunctions. The proof of the following proposition can still refer to the proof of \cite[Theorem 3.10]{Mordukhovich2006}. In fact, the proof is even simpler as $\lambda_k=1$ can be demonstrated directly without resorting to proof by contradiction. For this reason, we likewise omit the proof here.\vspace*{-0.05in}

\begin{proposition}\label{sum-psnc} In the setting of Theorem~{\rm\ref{sumrule}},  suppose that both multifunctions  $S_1$ and $S_2$ are PSNC relative to $\Omega$ at $(\bar x,\bar y_1)$ and $(\bar x,\bar y_2)$, respectively. Then their sum $S_1+S_2$ is PSNC relative to $\Omega$ at $(\bar x,\bar y_1 + \bar y_2)$.
\end{proposition}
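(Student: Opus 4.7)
The plan is to run the argument of Theorem~\ref{sumrule} starting from a PSNC test sequence. Take $\varepsilon_k\downarrow 0$, $(x_k,y_k)\stackrel{\operatorname{gph}(S_1+S_2)|_\Omega}{\longrightarrow}(\bar x,\bar y)$, $x_k^*\rightharpoonup^* 0$, $y_k^*\to 0$, and $(x_k^*,-y_k^*)\in\widehat N_{\varepsilon_k}((x_k,y_k);\operatorname{gph}(S_1+S_2)|_\Omega)\cap [J_X(T(x_k;\Omega))\times Y^*]$; the task is to verify $\|x_k^*\|_{X^*}\to 0$. Inner semicontinuity of $G$ in \eqref{G-sum} provides $(y_{1k},y_{2k})\in G(x_k,y_k)$ converging to $(\bar y_1,\bar y_2)$. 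Introducing the sets $\Theta_i\subset X\times Y\times Y$ exactly as in the proof of Theorem~\ref{sumrule} gives the analogous inclusion for $(x_k^*,-y_k^*,-y_k^*)$ on $\Theta_1\cap\Theta_2$, and invoking Lemma~\ref{fuzzy-relative} yields $\lambda_k\geq 0$, $(\tilde x_{ik},\tilde y_{ik})\in\operatorname{gph}S_i|_\Omega$ near $(x_k,y_{ik})$, and $(x_{ik}^*,y_{ik}^*)\in[\widehat N((\tilde x_{ik},\tilde y_{ik});\operatorname{gph}S_i|_\Omega)+2\varepsilon_k(B_X\times B_Y)]\cap[T(\tilde x_{ik};\Omega)\times Y]$ with
\[
\|\lambda_k(x_k^*,-y_k^*,-y_k^*)-(x_{1k}^*+x_{2k}^*,-y_{1k}^*,-y_{2k}^*)\|\leq 3\varepsilon_k,\quad \max\{\lambda_k,\|(x_{1k}^*,y_{1k}^*)\|\}=1.
\]

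Next I would show $\lambda_k$ is bounded away from zero. Supposing $\lambda_k\downarrow 0$, the normalization gives $\|(x_{1k}^*,y_{1k}^*)\|=1$ eventually; together with the fuzzy estimate and $y_k^*\to 0$ this yields $\|y_{1k}^*\|\to 0$ and hence $\|x_{1k}^*\|_{X^*}\to 1$. Boundedness of $\{x_{1k}^*\}$ and the reflexivity (Hilbert) of $X$ allow us to extract $x_{1k}^*\rightharpoonup^* x_0^*$ along a subsequence, whereupon $x_{2k}^*\rightharpoonup^* -x_0^*$ by the first bound. The $2\varepsilon_k(B_X\times B_Y)$-perturbation of $\widehat N$ sits in $\widehat N_{C\varepsilon_k}$ and $x_{ik}^*\in J_X(T(\tilde x_{ik};\Omega))$, so Definition~\ref{coderivatives}(iii) produces $x_0^*\in D_M^*(S_1)_\Omega(\bar x|\bar y_1)(0)$ and $-x_0^*\in D_M^*(S_2)_\Omega(\bar x|\bar y_2)(0)$. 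The qualification condition \eqref{sumQC} forces $x_0^*=0$, and PSNC of $S_1$ at $(\bar x,\bar y_1)$ then yields $\|x_{1k}^*\|_{X^*}\to 0$ along the subsequence, a contradiction.

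With $\lambda_k=1$ (WLOG), the fuzzy bound gives $\|y_{ik}^*-y_k^*\|\leq 3\varepsilon_k$, so $y_{ik}^*\to 0$ strongly, and $\|x_{1k}^*+x_{2k}^*-x_k^*\|_{X^*}\leq 3\varepsilon_k$, so $x_{1k}^*+x_{2k}^*\rightharpoonup^* 0$. I then invoke the subsequence--subsequence principle: given any subsequence of $\{x_k^*\}$, extract a further subsequence along which $x_{1k}^*\rightharpoonup^* x_0^*$; as in the previous step $x_0^*$ lies in $D_M^*(S_1)_\Omega(\bar x|\bar y_1)(0)\cap\bigl(-D_M^*(S_2)_\Omega(\bar x|\bar y_2)(0)\bigr)$, and \eqref{sumQC} gives $x_0^*=0$. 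PSNC of $S_1$ and of $S_2$ at their respective base points now forces $\|x_{1k}^*\|_{X^*}\to 0$ and $\|x_{2k}^*\|_{X^*}\to 0$ along this further subsequence, whence $\|x_k^*\|_{X^*}\to 0$ along it. Since every subsequence of $\{\|x_k^*\|_{X^*}\}$ has a sub-subsequence tending to zero, the whole sequence tends to zero, which is the claimed relative PSNC of $S_1+S_2$.

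The main obstacle, as in Theorem~\ref{sumrule}, is the careful bookkeeping that turns the $2\varepsilon_k(B_X\times B_Y)$-perturbation of $\widehat N$ together with the tangential constraint $(x_{ik}^*,y_{ik}^*)\in T(\tilde x_{ik};\Omega)\times Y$ into the exact template of Definition~\ref{psnc} and Definition~\ref{coderivatives}(iii), so that passing to weak$^*$ limits along subsequences actually produces elements of $D_M^*(S_i)_\Omega(\bar x|\bar y_i)(0)$. Once that accounting is precise, the qualification condition \eqref{sumQC} and the two PSNC hypotheses combine cleanly to give the conclusion.
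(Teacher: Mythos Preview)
Your argument is correct and essentially matches the paper's proof: the same passage to $\Theta_1\cap\Theta_2$, the same application of Lemma~\ref{fuzzy-relative}, and the same use of \eqref{sumQC} together with the relative PSNC of both $S_i$ to force $x_{ik}^*\to 0$ and hence $x_k^*\to 0$. The only structural difference is that you first run the contradiction step to show $\lambda_k$ is bounded away from zero (mirroring Theorem~\ref{sumrule}) and then set $\lambda_k=1$, whereas the paper skips that step: since $\lambda_k\le 1$ and $y_k^*\to 0$, it gets $y_{ik}^*\to 0$ and $x_{1k}^*+x_{2k}^*\rightharpoonup^*0$ directly, extracts $x_{1k}^*\rightharpoonup^* x_0^*=0$, uses both PSNC hypotheses to obtain $x_{ik}^*\to 0$, and only then reads off $\lambda_k=1$ from the normalization. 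Your extra step is harmless but redundant here, and your explicit sub-subsequence bookkeeping is a welcome clarification that the paper leaves implicit.
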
\vspace*{-0.1in}

\section{Concluding Remarks}\label{sec:conc} In this paper, we introduced and studied novel {\it relative contingent coderivatives} for set-valued mappings between Banach spaces. These coderivatives, being new even in finite dimensions, explore the ideas of combining {\it primal and dual} constructions that have never been tasted in infinite-dimensional variational analysis. In this way, we were able to establish {\it complete  characterizations} of {\it well-posedness} properties for general constrained systems and to develop comprehensive {\it calculus rules} for relative contingent coderivatives and relative PSNC properties.

Our future research will be addressing further applications of constrained coderi- vatives and variational calculus to challenging problems of optimization and optimal control. {  It is noted that the projectional coderivative introduced in \cite{Yang2021} was applied to derive a sufficient condition, an extension of the critical face condition in \cite{dontchev1996}, for solutions of affine variational inequalities to be relative Lipschitz-like in \cite{Yao2023} in finite-dimensional spaces. The mixed contingent coderivative introduced in this paper can be applied to derive similar sufficient conditions as in \cite{Yao2023} for polyhedral sets in reflexive Banach spaces. However, as pointed out by one referee, more suitable tools in infinite-dimensional spaces are the generalized polyhedral set in \cite[Definition 2.195]{Bonnans2000} and the polyhedric set, see \cite{Wachsmuth2019}. We will investigate the application of these polyhedral sets in the investigation of the relative Lipchitz-like property of solutions for affine variational inequalities in combination with the mixed contingent coderivative.} \vspace*{0.03in}

{\bf Acknowledgements}. The authors are indebted to the handling Associate Editor and two anonymous referees for their valuable and constructive remarks that allowed us to improve the original presentation.\vspace*{-0.15in}

\end{document}